\providecommand{\U}[1]{\protect\rule{.1in}{.1in}}
\newtheorem{theorem}{Theorem}
\theoremstyle{plain}
\newtheorem{definition}{Definition}
\newtheorem{lemma}{Lemma}
\numberwithin{equation}{section}
\begin{document}
\title[Trispectrum for non-Gaussian field on the 2D-plane]{Trispectrum and higher order spectra for non-Gaussian homogenous and isotropic
field on the 2D-plane}
\author{Gy\"{o}rgy Terdik}
\address{Faculty of Informatics, University of Debrecen, Hungary, }
\email{Terdik.Gyorgy$@$inf.unideb.hu}
\date{April 8, 2016}
\subjclass[2000]{Primary 60G60; Secondary 60G10}
\keywords{Homogenous fields, Isotropic fields, Non-Gaussian field, Tricovariances,
Trispectrum, Higher order cumulants, Higher order spectra, 2D, Isotropic
process on circle.}

\begin{abstract}
In this paper we study the non-Gaussian homogenous and isotropic field on the
plane in frequency domain. The trispectrum and higher order spectra of such a
field are described in terms of Bessel functions. Some particular integrals of
Bessel functions are considered as well.

\end{abstract}
\maketitle

\section{Introduction}

In several fields of sciences like geophysics, astrophysics, climatology etc.
we come across observations which are non-Gaussian . A Gaussian process is
characterized by its first two moments, namely, mean, variance and
autocorrelations (or equivalently second order spectrum). There are several
clearly different processes having identical second-order properties
\cite{SRao97}, \cite{Igloi_Terd-Gyor}, \cite{diggle2013statistical} but their
distributions is not Gaussian and they are clearly different, therefore it is
necessary to study higher order structures. Although second order properties
of Gaussian fields are well established, see \cite{Yaglom-book-87},
\cite{Yadrenko1983}, \cite{adler2010geometr}, \cite{Brill-book-01},
\cite{Rosenblatt_Lin}, \cite{Rosenblatt1985}, \cite{Priest6},
\cite{leonenko1989statistical}, \cite{baxendale1986isotropic},
\cite{leonenko2012spectral}, \cite{moklyachuk2007prediction},
\cite{Moklyachuk2008}. Characterization of non-Gaussian fields which require
study of higher order moments (or equivalently higher order spectra) are not
well known. The isotropy of stochastic phenomenons in two dimensions has been
established in several applications, \cite{nozawa1997spectral},
\cite{ogura1996scattering}. Only recently has been made quite a number of
steps toward the statistical investigation of non-Gaussian isotropic fields,
mainly for understanding the Cosmic Microwave Background (CMB) anisotropies
\cite{Okamoto2002}, \cite{Adshead2012}.

In this paper our object is to continue the frequency domain investigations
published in our paper \cite{Terdik2014Publi}, where additional to the
covarinace and the spectrum the connection between the bicovariance and the
bispectrum has been studied in details. Here the trispectrum and all higher
order spectra of such fields are described in terms of Bessel functions. The
connection between the tricovariance and the trispectrum is similar to the one
between the bicovariance and the bispectrum It was necessary to prove some
particular integrals of Bessel functions, given in terms of sides and angles
of a multilateral.

\subsection{Isotropy}

A homogenous real valued stochastic field $X\left(  \underline{x}\right)  $,
$\underline{x}\in\mathbb{R}^{2}$, which is continuous (in mean square sense),
has the spectral representation
\[
X\left(  \underline{x}\right)  =\int_{\mathbb{R}^{2}}e^{i\underline{x}%
\cdot\underline{\omega}}Z\left(  d\underline{\omega}\right)  ,\quad
\underline{\omega},\underline{x}\in\mathbb{R}^{2},
\]
with $EX\left(  \underline{x}\right)  =0$ and the orthogonal spectral measure
$Z\left(  d\underline{\omega}\right)  $ has $E\left\vert Z\left(
d\underline{\omega}\right)  \right\vert ^{2}=F_{0}\left(  d\underline{\omega
}\right)  $. Homogeneity is defined in strict sense i.e. all the finite
dimensional distributions of $X\left(  \underline{x}\right)  $ are translation
invariant, see \cite{Yaglom-book-87} for details. We can rewrite $X\left(
\underline{x}\right)  $ in terms of polar coordinates
\begin{equation}
X\left(  r,\varphi\right)  =\int_{0}^{\infty}\int_{0}^{2\pi}e^{i\rho
r\cos\left(  \varphi-\eta\right)  }Z\left(  \rho d\rho d\eta\right) \nonumber
\end{equation}
where $\underline{x}=\left(  r,\varphi\right)  $, $\underline{\omega}=\left(
\rho,\eta\right)  $ are polar coordinates, $r=\left\vert \underline
{x}\right\vert =\sqrt{x_{1}^{2}+x_{2}^{2}}$, $\rho=\left\vert \underline
{\omega}\right\vert $, $\underline{x}\cdot\underline{\omega}=r\rho\cos\left(
\varphi-\eta\right)  $. This representation provides an isotropic field if
$F_{0}\left(  d\underline{\omega}\right)  $ is isotropic, i.e. $F_{0}\left(
d\underline{\omega}\right)  =E\left\vert Z\left(  d\underline{\omega}\right)
\right\vert ^{2}=E\left\vert Z\left(  \rho d\rho d\eta\right)  \right\vert
^{2}=F\left(  \rho d\rho\right)  d\eta$. The isotropy usually is defined
through the invariance of the covariance structure under rotations. A rotation
$g\in SO\left(  2\right)  $ is characterized by an angle $\gamma$. We consider
rotations about the origin of the coordinate system. If $\underline{x}%
\in\mathbb{R}^{2}$ is given in polar coordinates $\underline{x}=\left(
r,\varphi\right)  $, then $g\underline{x}=\left(  r,\varphi-\gamma\right)  $,
and as usual the operator $\Lambda\left(  g\right)  $ acts on functions
$f\left(  r,\varphi\right)  $, such that $\Lambda\left(  g\right)  f\left(
r,\varphi\right)  =f\left(  g^{-1}\left(  r,\varphi\right)  \right)  =f\left(
r,\varphi+\gamma\right)  $.

The invariance of the covariance function is satisfactory for Gaussian cases
but for non-Gaussian fields we need invariance of higher order cumulants as well.

\begin{definition}
A homogenous stochastic field $X\left(  \underline{x}\right)  $ is strictly
isotropic if all finite dimensional distributions of $X\left(  \underline
{x}\right)  $ are invariant under rotations.
\end{definition}

As far as the homogenous field $X\left(  \underline{x}\right)  $ is Gaussian
the isotropy of the spectral measure $F_{0}\left(  d\underline{\omega}\right)
$, i.e. in polar coordinates $F_{0}\left(  d\underline{\omega}\right)
=F\left(  \rho d\rho\right)  d\eta$, implies
\[
\operatorname*{Cov}\left(  \Lambda\left(  g\right)  X\left(  \underline{x}%
_{1}\right)  ,\Lambda\left(  g\right)  X\left(  \underline{x}_{2}\right)
\right)  =\operatorname*{Cov}\left(  X\left(  \underline{x}_{1}\right)
,X\left(  \underline{x}_{2}\right)  \right)  ,
\]
for each $\underline{x}_{1}$, $\underline{x}_{2}$ and for every $g\in
SO\left(  2\right)  $. It will be convenient for us later if we assume the
existence of all moments of the field $X\left(  \underline{x}\right)  $, in
this way from the isotropy follows that all higher order moments and cumulants
are also invariant under rotations.

Let us consider a homogenous and isotropic stochastic field $X\left(
\underline{x}\right)  =X\left(  r,\varphi\right)  $, ($r>0$, $\varphi
\in\left[  0,2\pi\right)  $) on the plane and put it into spectral
representation, see \cite{Terdik2014Publi}
\begin{equation}
X\left(  r,\varphi\right)  =\sum_{\ell=-\infty}^{\infty}e^{i\ell\varphi}%
\int_{0}^{\infty}J_{\ell}\left(  \rho r\right)  Z_{\ell}\left(  \rho
d\rho\right)  , \label{SeriesRepr_X}%
\end{equation}
where $J_{\ell}$ denotes the Bessel function of the first kind, and
\begin{equation}
Z_{\ell}\left(  \rho d\rho\right)  =\int_{0}^{2\pi}i^{\ell}e^{-i\ell\eta
\ }Z\left(  \rho d\rho d\eta\right)  . \label{measure_Z_l}%
\end{equation}
$Z_{\ell}$ is an array of measures, orthogonal to each other%
\[
\operatorname*{Cov}\left(  Z_{\ell_{1}}\left(  \rho_{1}d\rho_{1}\right)
,Z_{\ell_{2}}\left(  \rho_{2}d\rho_{2}\right)  \right)  =\delta_{\ell_{1}%
-\ell_{2}}F\left(  \rho d\rho\right)  ,
\]
where $\delta_{\ell}$ denotes the Kronecker-$\delta$. Note that the spectral
measure $F\left(  \rho d\rho\right)  $ of the stochastic spectral measure
$Z_{\ell}\left(  \rho d\rho\right)  $ does not depend on $\ell$. In
representation (\ref{SeriesRepr_X}) $e^{i\ell\varphi}$ plays a role of
spherical harmonics of degree $\ell$ with complex values on the plane. It
follows that an isotropic random field $X\left(  \underline{x}\right)  $ can
be decomposed into a countable number of mutually uncorrelated spectral
measures with a one dimensional parameter.

The isotropy of $X\left(  r,\varphi\right)  $ implies that the distribution of
$X\left(  r,\varphi\right)  $ does not change under rotations $g\in SO\left(
2\right)  $
\begin{align*}
\Lambda\left(  g\right)  X\left(  r,\varphi\right)   &  =\sum_{\ell=-\infty
}^{\infty}e^{i\ell\varphi}\int_{0}^{\infty}J_{\ell}\left(  \rho r\right)
e^{i\ell\gamma}Z_{\ell}\left(  \rho d\rho\right) \\
&  =\sum_{\ell=-\infty}^{\infty}e^{i\ell\varphi}\int_{0}^{\infty}J_{\ell
}\left(  \rho r\right)  Z_{\ell}\left(  \rho d\rho\right)  ,
\end{align*}
hence the distribution of $Z_{\ell}\left(  \rho d\rho\right)  $ and
$e^{i\ell\gamma}Z_{\ell}\left(  \rho d\rho\right)  $ should be the same.
Therefore under isotropy assumption we have
\[
\operatorname*{Cum}\left(  Z_{\ell_{1}}\left(  \rho_{1}d\rho_{1}\right)
,Z_{\ell_{2}}\left(  \rho_{2}d\rho_{2}\right)  \right)  =e^{i\left(  \ell
_{1}+\ell_{2}\right)  \gamma}\operatorname*{Cum}\left(  Z_{\ell_{1}}\left(
\rho_{1}d\rho_{1}\right)  ,Z_{\ell_{2}}\left(  \rho_{2}d\rho_{2}\right)
\right)  ,
\]
for each $\gamma$, hence either $\ell_{1}+\ell_{2}=0$, or $\operatorname*{Cum}%
\left(  Z_{\ell_{1}}\left(  \rho_{1}d\rho_{1}\right)  ,Z_{\ell_{2}}\left(
\rho_{2}d\rho_{2}\right)  \right)  =0$. In general, under isotropy assumption
we have
\[
\operatorname*{Cum}\left(  Z_{\ell_{1}}\left(  \rho_{1}d\rho_{1}\right)
,\ldots,Z_{\ell_{p}}\left(  \rho_{p}d\rho_{p}\right)  \right)  =e^{i\left(
\ell_{1}+\ell_{2}\cdots+\ell_{p}\right)  \gamma}\operatorname*{Cum}\left(
Z_{\ell_{1}}\left(  \rho_{1}d\rho_{1}\right)  ,\ldots,Z_{\ell_{p}}\left(
\rho_{p}d\rho_{p}\right)  \right)  ,
\]
that is either $\ell_{1}+\ell_{2}\cdots+\ell_{p}=0$, or $\operatorname*{Cum}%
\left(  Z_{\ell_{1}}\left(  \rho_{1}d\rho_{1}\right)  ,Z_{\ell_{2}}\left(
\rho_{2}d\rho_{2}\right)  ,\ldots,Z_{\ell_{p}}\left(  \rho_{p}d\rho
_{p}\right)  \right)  =0$, should be fulfilled. In turn, if this assumption
fulfils for each $p$, then the cumulants \newline$\operatorname*{Cum}\left(
Z_{\ell_{1}}\left(  \rho_{1}d\rho_{1}\right)  ,\ldots,Z_{\ell_{p}}\left(
\rho_{p}d\rho_{p}\right)  \right)  $ are invariant under rotations and if in
addition the distributions of $X\left(  r,\varphi\right)  $ are determined by
the moments then the field is isotropic.

\subsection{Spectrum and Bispectrum}

It is well known from theory of Gaussian fields that
\[
\operatorname*{Cov}\left(  X\left(  \underline{x}\right)  ,X\left(
\underline{y}\right)  \right)  =\int_{0}^{\infty}J_{0}\left(  \rho r\right)
F\left(  \rho d\rho\right)  ,
\]
where $r=\left\vert \underline{x}-\underline{y}\right\vert $,
\cite{Yadrenko1983}, \cite{Yaglom-book-87}, \cite{Brilling74}. The covariances
and the spectral measure uniquely define each other since the Hankel transform
gives the inverse. For absolutely continuos spectral measure we have $F\left(
\rho d\rho\right)  =\sigma^{2}\left\vert A\left(  \rho\right)  \right\vert
^{2}\rho d\rho$ and therefore
\begin{align*}
\mathcal{C}_{2}\left(  r\right)   &  =\int_{0}^{\infty}J_{0}\left(  \rho
r\right)  \sigma^{2}\left\vert A\left(  \rho\right)  \right\vert ^{2}\rho
d\rho,\\
\sigma^{2}\left\vert A\left(  \rho\right)  \right\vert ^{2}  &  =\frac{1}%
{2\pi}\int_{0}^{\infty}J_{0}\left(  \rho r\right)  \mathcal{C}_{2}\left(
r\right)  rdr,
\end{align*}
where $\mathcal{C}_{2}\left(  r\right)  =\operatorname*{Cov}\left(  X\left(
\underline{x}\right)  ,X\left(  \underline{y}\right)  \right)  $,
$r=\left\vert \underline{x}-\underline{y}\right\vert $.

The third order structure of a homogenous and isotropic stochastic field
$X\left(  \underline{x}\right)  $ is described by either the third order
cumulants (bicovariances) in spatial domain or the bispectrum in frequency
domain. The bicovariance of $X\left(  \underline{x}\right)  $ is%
\begin{equation}
\operatorname*{Cum}\left(  X\left(  \underline{x}_{1}\right)  ,X\left(
\underline{x}_{2}\right)  ,X\left(  \underline{x}_{3}\right)  \right)
=\operatorname*{Cum}\left(  X\left(  g\left(  \underline{x}_{1}-\underline
{x}_{3}\right)  \right)  X\left(  \left\vert \underline{x}_{2}-\underline
{x}_{3}\right\vert \underline{n}\right)  ,X\left(  0\right)  \right)
\label{Bicov}%
\end{equation}
where $g$ denotes the rotation carrying $\underline{x}_{2}-\underline{x}_{3}$
into the $\underline{n}=\left(  0,1\right)  $. The third order cumulant of the
stochastic spectral measure $Z\left(  d\underline{\omega}\right)  $ of the
homogenous field $X\left(  \underline{x}\right)  $ is given by
\begin{align*}
\operatorname*{Cum}\left(  Z\left(  d\underline{\omega}_{1}\right)  ,Z\left(
d\underline{\omega}_{2}\right)  ,Z\left(  d\underline{\omega}_{3}\right)
\right)   &  =\delta\left(  \Sigma_{1}^{3}\underline{\omega}_{k}\right)
S_{3}\left(  \underline{\omega}_{1},\underline{\omega}_{2},\underline{\omega
}_{3}\right)  d\underline{\omega}_{1}d\underline{\omega}_{2}d\underline
{\omega}_{3}\\
&  =\delta\left(  \Sigma_{1}^{3}\rho_{k}\underline{\widehat{\omega}}%
_{k}\right)  S_{3}\left(  \rho_{1},\rho_{2},\alpha_{3}\right)
%TCIMACRO{\tprod \limits_{k=1}^{3}}%
%BeginExpansion
{\textstyle\prod\limits_{k=1}^{3}}
%EndExpansion
\Omega\left(  d\underline{\widehat{\omega}}_{k}\right)  \rho_{k}d\rho_{k},
\end{align*}
where $\underline{\widehat{\omega}}_{k}=\underline{\omega}_{k}/\left\vert
\underline{\omega}_{k}\right\vert $. Now $S_{3}\left(  \rho_{1},\rho
_{2},\alpha_{3}\right)  $ depends on $0<\alpha_{3}<\pi$, in other words
depends on $\left(  \rho_{1},\rho_{2},\rho_{3}\right)  $, such that these
positive numbers form a triangle, see Figure \ref{Triangle}.

The bicovariance $\operatorname*{Cum}\left(  X\left(  \underline{x}%
_{1}\right)  ,X\left(  r_{2}\underline{n}\right)  ,X\left(  0\right)  \right)
$ depends on the lengths $r_{2}$, $r_{1}=\left\vert \underline{x}%
_{1}\right\vert $ and the angle $\varphi$ between them, this way a triangle is
defined with length of the third side $r_{3}$, such that $r_{3}^{2}=r_{1}%
^{2}+r_{2}^{2}-2r_{1}r_{2}\cos\left(  \varphi\right)  $. According to this
definition of $r_{1}$, we introduce $\mathcal{C}_{3}\left(  r_{1},r_{2}%
,r_{3}\right)  =\operatorname*{Cum}\left(  X\left(  0\right)  ,X\left(
r_{2}\underline{n}\right)  ,X\left(  \underline{x}_{3}\right)  \right)  $.
Similarly the bispectrum $S_{3}$ of the homogenous and isotropic stochastic
field $X\left(  \underline{x}\right)  $ depends on wave numbers $\left(
\rho_{1},\rho_{2},\rho_{3}\right)  $ such that $\rho_{1},\rho_{2},\rho_{3}$
should form a triangle. It has been shown, see \cite{Terdik2014Publi}, that
\[
\mathcal{C}_{3}\left(  \varphi,r_{2},r_{3}\right)  =2\iint\limits_{0}^{\infty
}\int_{0}^{\pi}\mathcal{T}_{3}\left(  \left.  \alpha,\rho_{2},\rho
_{3}\right\vert \varphi,r_{2},r_{3}\right)  S_{3}\left(  \rho_{1},\rho
_{2},\alpha\right)  d\alpha%
%TCIMACRO{\tprod \limits_{k=2}^{3}}%
%BeginExpansion
{\textstyle\prod\limits_{k=2}^{3}}
%EndExpansion
\rho_{k}d\rho_{k},
\]
where the function
\begin{equation}
\mathcal{T}_{3}\left(  \left.  \alpha,\rho_{2},\rho_{3}\right\vert
\varphi,r_{2},r_{3}\right)  =\sum_{\ell=-\infty}^{\infty}\cos\left(
\ell\varphi\right)  J_{\ell}\left(  \rho_{2}r_{2}\right)  J_{\ell}\left(
\rho_{3}r_{3}\right)  \cos\left(  \ell\alpha\right)  , \label{Transf_T}%
\end{equation}
gives the transformation of the bispectrum $S_{3}\left(  \rho_{1},\rho
_{2},\alpha\right)  $ into the bicovariance $\mathcal{C}_{3}\left(
\varphi,r_{2},r_{3}\right)  $. Notice that both angles $\varphi$ and $\alpha$
are related to the third sides $\rho_{1}$ and $r_{1}$ of the triangles,
defined by the wave numbers $\left(  \rho_{1},\rho_{2},\rho_{3}\right)  $ and
distances $\left(  r_{1},r_{2},r_{3}\right)  $. Distances $\left(  r_{1}%
,r_{2},r_{3}\right)  $ \textit{are not}\textbf{ }the norm of\textbf{ }$\left(
\underline{x}_{1},\underline{x}_{2},\underline{x}_{3}\right)  $ in
(\ref{Bicov}) but defined by $\left\vert \underline{x}_{1}-\underline{x}%
_{3}\right\vert $, $\left\vert \underline{x}_{2}-\underline{x}_{3}\right\vert
$ and the angle $\varphi$ is the one between the differences. By inversion of
the bicovariance function, we obtain the bispectrum
\begin{equation}
S_{3}\left(  \rho_{1},\rho_{2},\rho_{3}\right)  =\frac{1}{2\pi}\iint
\limits_{0}^{\infty}\int_{0}^{\pi}\mathcal{T}_{3}\left(  \left.  \alpha
,\rho_{2},\rho_{3}\right\vert \varphi,r_{2},r_{3}\right)  \mathcal{C}%
_{3}\left(  r_{1},r_{2},r_{3}\right)  d\varphi%
%TCIMACRO{\tprod \limits_{k=2}^{3}}%
%BeginExpansion
{\textstyle\prod\limits_{k=2}^{3}}
%EndExpansion
r_{k}dr_{k}. \label{Bisp_T_transf}%
\end{equation}

\section{Expression for Trispectrum of the Field}

The spectrum and bispectrum of a homogenous and isotropic stochastic field
have particular form and as it will be seen, considering trispectrum, they do
not show the general pattern for higher order spectra.

From now on we introduce a short notation for vectors using sets for indices,
for instance $\underline{\omega}_{1:4}$, where $1:4=\left(  1,2,3,4\right)  $,
denotes $\left(  \underline{\omega}_{1},\underline{\omega}_{2},\underline
{\omega}_{3},\underline{\omega}_{4}\right)  $, and so on.

Consider the spectral representation of the fourth order cumulant of a
homogenous field
\[
\operatorname*{Cum}\left(  X\left(  \underline{x}_{1}\right)  ,X\left(
\underline{x}_{2}\right)  ,X\left(  \underline{x}_{3}\right)  ,X\left(
\underline{x}_{4}\right)  \right)  =\underset{4}{\underbrace{\int
_{\mathbb{R}^{2}}\cdots\int_{\mathbb{R}^{2}}}}e^{i\left(  \Sigma_{1}%
^{4}\underline{x}_{k}\cdot\underline{\omega}_{k}\right)  }S_{4}\left(
\underline{\omega}_{1:4}\right)  \delta\left(  \Sigma_{1}^{4}\underline
{\omega}_{k}\right)
%TCIMACRO{\tprod \limits_{k=1}^{4}}%
%BeginExpansion
{\textstyle\prod\limits_{k=1}^{4}}
%EndExpansion
d\underline{\omega}_{k},
\]
and under isotropy assumption for each $g\in SO\left(  2\right)  $ we have in
addition
\begin{align*}
\operatorname*{Cum}\left(  X\left(  g\underline{x}_{1}\right)  ,X\left(
g\underline{x}_{2}\right)  ,X\left(  g\underline{x}_{3}\right)  ,X\left(
g\underline{x}_{4}\right)  \right)   &  =\underset{4}{\underbrace
{\int_{\mathbb{R}^{2}}\cdots\int_{\mathbb{R}^{2}}}}e^{i\left(  \Sigma_{1}%
^{4}\underline{x}_{k}\cdot\underline{\omega}_{k}\right)  }S_{4}\left(
g\underline{\omega}_{1:4}\right)  \delta\left(  \Sigma_{1}^{4}\underline
{\omega}_{k}\right)
%TCIMACRO{\tprod \limits_{k=1}^{4}}%
%BeginExpansion
{\textstyle\prod\limits_{k=1}^{4}}
%EndExpansion
d\underline{\omega}_{k}\\
&  =\operatorname*{Cum}\left(  X\left(  \underline{x}_{1}\right)  ,X\left(
\underline{x}_{2}\right)  ,X\left(  \underline{x}_{3}\right)  ,X\left(
\underline{x}_{4}\right)  \right)  ,
\end{align*}
hence $S_{4}\left(  \underline{\omega}_{1:4}\right)  =S_{4}\left(
\underline{\omega}_{1:3},-\Sigma_{1}^{3}\underline{\omega}_{k}\right)  $ and
at the same time $S_{4}\left(  g\underline{\omega}_{1:4}\right)  =S_{4}\left(
\underline{\omega}_{1:4}\right)  $. Now $\underline{\omega}_{1:4}$ is defined
by eight coordinates and if $\Sigma_{1}^{4}\underline{\omega}_{k}=0$, then
these vectors form a quadrilateral, in general. This quadrilateral has
invariants under rotations. In this way $S_{4}\left(  \underline{\omega}%
_{1:4}\right)  =S_{4}\left(  \alpha_{1},\rho_{2},\rho_{3},\rho_{4},\beta
_{2}\right)  $, see Figure \ref{Quadrilateral}.

Now, let us shift the vectors $\left(  \underline{x}_{1},\underline{x}%
_{2},\underline{x}_{3},\underline{x}_{4}\right)  $ by the vector
$-\underline{x}_{1}$, and rotate this new set of vectors $\left(
\underline{x}_{2}-\underline{x}_{1},\underline{x}_{3}-\underline{x}%
_{1},\underline{x}_{4}-\underline{x}_{1}\right)  =\left(  \underline{y}%
_{2},\underline{y}_{3},\underline{y}_{4}\right)  $ such that $\underline
{y}_{4}=\left\vert \underline{y}_{4}\right\vert \underline{n}$, where
$\underline{n}=\left(  1,0\right)  $, the locations $\left(  \underline
{0},\underline{y}_{2},\underline{y}_{3},\underline{y}_{4}\right)  $ will be
given by $r_{2}=\left\vert \underline{y}_{2}\right\vert $, $r_{3}=\left\vert
\underline{y}_{3}\right\vert $, $r_{4}$ $=\left\vert \underline{y}%
_{4}\right\vert $, together with angles $\left(  r_{2},\varphi_{2}\right)  $,
$\left(  r_{3},\varphi_{3}\right)  $. From now on we shall consider the
cumulants at locations defined by its invariants $\left(  \left(
r,\varphi\right)  _{2:3},r_{4}\right)  =\left(  \left(  r_{2},\varphi
_{2}\right)  ,\left(  r_{3},\varphi_{3}\right)  ,r_{4}\right)  $.

We use the invariance of the cumulants under the shift and rotation to obtain
\begin{multline*}
\operatorname*{Cum}\left(  X\left(  \underline{x}_{1}\right)  ,X\left(
\underline{x}_{2}\right)  ,X\left(  \underline{x}_{3}\right)  ,X\left(
\underline{x}_{4}\right)  \right)  =\operatorname*{Cum}\left(  X\left(
0\right)  ,X\left(  \underline{x}_{2}-\underline{x}_{1}\right)  ,X\left(
\underline{x}_{3}-\underline{x}_{1}\right)  ,X\left(  \underline{x}%
_{4}-\underline{x}_{1}\right)  ,\right) \\
=\operatorname*{Cum}\left(  X\left(  0\right)  ,X\left(  g\left(
\underline{x}_{2}-\underline{x}_{1}\right)  \right)  ,X\left(  g\left(
\underline{x}_{3}-\underline{x}_{1}\right)  \right)  ,X\left(  g\left(
\underline{x}_{4}-\underline{x}_{1}\right)  \right)  \right)
\end{multline*}
where $g$ denotes the rotation carrying the $\underline{x}_{4}-\underline
{x}_{1}$ into the $x$ axis. The general form of cumulants is
$\operatorname*{Cum}\left(  X\left(  0\right)  ,X\left(  \underline{x}%
_{2}\right)  ,X\left(  \underline{x}_{3}\right)  ,X\left(  r_{4}\underline
{n}\right)  \right)  $, where $\underline{x}_{2}$ and $\underline{x}_{3}$ are
arbitrary locations and $\underline{n}=\left(  1,0\right)  $. The fourth order
cumulants of a homogenous and isotropic stochastic field $X\left(
\underline{x}\right)  $ are determined by the quantities $r_{4}$,
$\underline{x}_{2}$ and $\underline{x}_{3}$, in other words by $\left(
\left(  r,\varphi\right)  _{2:3},r_{4}\right)  $, see Figure
\ref{Quadrilateral_x}. Let
\[
\mathcal{C}_{4}\left(  \left(  r_{2},\varphi_{2}\right)  ,\left(
r_{3},\varphi_{3}\right)  ,r_{4}\right)  =\operatorname*{Cum}\left(  X\left(
\underline{x}_{1}\right)  ,X\left(  \underline{x}_{2}\right)  ,X\left(
r_{3}\underline{n}\right)  ,X\left(  0\right)  \right)  ,
\]
and the trispectrum $S_{4}\left(  \alpha_{1},\rho_{2:4},\beta_{2}\right)  $ is
given on the domain of variables $\left(  \alpha_{1},\rho_{2},\rho_{3}%
,\rho_{4},\beta_{2}\right)  $, where $\alpha_{1},\beta_{2}\in\left(
0,\pi\right)  $, and $0<\rho_{2},\rho_{3},\rho_{4}$.

Let
\begin{align*}
&  \mathcal{T}_{4}\left(  \left.  \alpha_{1},\rho_{2:4},\beta_{2}\right\vert
\left(  r,\varphi\right)  _{2:3},r_{4}\right) \\
&  =\sum_{\ell_{2},\ell_{3}=-\infty}^{\infty}e^{i\left(  \ell_{2}\varphi
_{2}+\ell_{3}\varphi_{3}\right)  }J_{\ell_{2}}\left(  \rho_{2}r_{2}\right)
J_{\ell_{3}}\left(  \rho_{3}r_{3}\right)  J_{\ell_{2}+\ell_{3}}\left(
\rho_{4}r_{4}\right)  \cos\left(  \ell_{2}\alpha_{1}\right)  \cos\left(
\ell_{2}\alpha_{3}-\ell_{3}\beta_{2}\right)  ,
\end{align*}
where the angle $\alpha_{3}$ is determined by $\rho_{2},\rho_{3},\rho_{4}$,
i.e. $\left(  \rho_{2}^{2}+\rho_{4}^{2}-\rho_{3}^{2}\right)  /\left(
2\rho_{2}\rho_{4}\right)  =\cos\alpha_{3}$, see Figure \ref{Quadrilateral}.

\begin{theorem}
\label{Theo_trisp}Let $X\left(  \underline{x}\right)  $ be a homogenous and
isotropic stochastic field on the plane then
\begin{align*}
\mathcal{C}_{4}\left(  \left(  r,\varphi\right)  _{2:3},r_{4}\right)   &
=4\iiint\limits_{0}^{\infty}\iint_{0}^{\pi}\mathcal{T}_{4}\left(  \left.
\alpha_{1},\rho_{2:4},\beta_{2}\right\vert \left(  r,\varphi\right)
_{2:3},r_{4}\right) \\
&  \times S_{4}\left(  \alpha_{1},\rho_{2:4},\beta_{2}\right)
%TCIMACRO{\tprod \limits_{k=2}^{4}}%
%BeginExpansion
{\textstyle\prod\limits_{k=2}^{4}}
%EndExpansion
\rho_{k}d\rho_{k}d\alpha_{1}d\beta_{2},
\end{align*}
In return
\begin{align}
S_{4}\left(  \alpha_{1},\rho_{2:4},\beta_{2}\right)   &  =\frac{1}{\left(
2\pi\right)  ^{4}}\iiint\limits_{0}^{\infty}\iint_{0}^{2\pi}\mathcal{T}%
_{4}\left(  \left.  \alpha_{1},\rho_{2:4},\beta_{2}\right\vert \left(
r,\varphi\right)  _{2:3},r_{4}\right) \nonumber\\
&  \times\mathcal{C}_{4}\left(  \left(  r,\varphi\right)  _{2:3},r_{4}\right)
%
%TCIMACRO{\tprod \limits_{k=2}^{4}}%
%BeginExpansion
{\textstyle\prod\limits_{k=2}^{4}}
%EndExpansion
r_{k}dr_{k}d\varphi_{2}d\varphi_{3}, \label{Trsp_tricov}%
\end{align}
unless the integrals exist.
\end{theorem}

\begin{proof}
We apply the series representation (\ref{SeriesRepr_X}) of $X\left(
\underline{x}\right)  $, and rewrite it for particular cases $\underline
{n}=\left(  -1,0\right)  $
\begin{align}
X\left(  r\underline{n}\right)   &  =\sum_{\ell=-\infty}^{\infty}\int
_{0}^{\infty}J_{\ell}\left(  \rho r\right)  Z_{\ell}\left(  \rho d\rho\right)
,\label{X_north}\\
X\left(  \underline{0}\right)   &  =\int_{\mathbb{R}^{2}}Z\left(
d\underline{\omega}\right)  \label{X_0}\\
&  =\int_{0}^{\infty}Z_{0}\left(  \rho d\rho\right)  .\nonumber
\end{align}
We obtain
\begin{multline*}
\operatorname*{Cum}\left(  X\left(  0\right)  ,X\left(  \underline{x}%
_{2}\right)  ,X\left(  \underline{x}_{3}\right)  ,X\left(  r_{4}\underline
{n}\right)  \right)  =\sum_{\ell_{2},\ell_{3},\ell_{4}=-\infty}^{\infty
}e^{i\left(  \ell_{2}\varphi_{2}+\ell_{3}\varphi_{3}\right)  }\\
\times\iiiint\limits_{0}^{\infty}J_{\ell_{2}}\left(  \rho_{2}r_{2}\right)
J_{\ell_{3}}\left(  \rho_{3}r_{3}\right)  J_{\ell_{4}}\left(  \rho_{4}%
r_{4}\right)  \operatorname*{Cum}\left(  Z_{0}\left(  \rho_{1}d\rho
_{1}\right)  ,Z_{\ell_{2}}\left(  \rho_{2}d\rho_{2}\right)  ,Z_{\ell_{3}%
}\left(  \rho_{3}d\rho_{3}\right)  ,Z_{\ell_{4}}\left(  \rho_{4}d\rho
_{4}\right)  \right)  \\
=\sum_{\ell_{2},\ell_{3}=-\infty}^{\infty}e^{i\left(  \ell_{2}\varphi_{2}%
+\ell_{3}\varphi_{3}\right)  }\iiiint\limits_{0}^{\infty}J_{\ell_{2}}\left(
\rho_{2}r_{2}\right)  J_{\ell_{3}}\left(  \rho_{3}r_{3}\right)  J_{-\left(
\ell_{2}+\ell_{3}\right)  }\left(  \rho_{4}r_{4}\right)  \\
\times\operatorname*{Cum}\left(  Z_{0}\left(  \rho_{1}d\rho_{1}\right)
,Z_{\ell_{2}}\left(  \rho_{2}d\rho_{2}\right)  ,Z_{\ell_{3}}\left(  \rho
_{3}d\rho_{3}\right)  ,Z_{-\left(  \ell_{2}+\ell_{3}\right)  }\left(  \rho
_{4}d\rho_{4}\right)  \right)  \\
=\sum_{\ell_{2},\ell_{3}=-\infty}^{\infty}e^{i\left(  \ell_{2}\varphi_{2}%
+\ell_{3}\varphi_{3}\right)  }\iiiint\limits_{0}^{\infty}J_{\ell_{2}}\left(
\rho_{2}r_{2}\right)  J_{\ell_{3}}\left(  \rho_{3}r_{3}\right)  J_{\ell
_{2}+\ell_{3}}\left(  \rho_{4}r_{4}\right)  \\
\times\left(  -1\right)  ^{\ell_{2}+\ell_{3}}\operatorname*{Cum}\left(
Z_{0}\left(  \rho_{1}d\rho_{1}\right)  ,Z_{\ell_{2}}\left(  \rho_{2}d\rho
_{2}\right)  ,Z_{\ell_{3}}\left(  \rho_{3}d\rho_{3}\right)  ,Z_{-\left(
\ell_{2}+\ell_{3}\right)  }\left(  \rho_{4}d\rho_{4}\right)  \right)  ,
\end{multline*}
in polar coordinates. The fourth order cumulant of the stochastic spectral
measure $Z\left(  d\underline{\omega}\right)  $ according to a homogenous
field $X\left(  \underline{x}\right)  $ fulfils the following equation
\[
\operatorname*{Cum}\left(  Z\left(  d\underline{\omega}_{1}\right)  ,Z\left(
d\underline{\omega}_{2}\right)  ,Z\left(  d\underline{\omega}_{3}\right)
,Z\left(  d\underline{\omega}_{4}\right)  \right)  =\delta\left(  \Sigma
_{1}^{4}\underline{\omega}_{k}\right)  S_{4}\left(  \underline{\omega}%
_{1:4}\right)
%TCIMACRO{\tprod \limits_{k=1}^{4}}%
%BeginExpansion
{\textstyle\prod\limits_{k=1}^{4}}
%EndExpansion
d\underline{\omega}_{k},
\]
and the stochastic spectral measures $Z_{\ell}\left(  \rho d\rho\right)  $ are
connected to $Z\left(  d\underline{\omega}\right)  $ by (\ref{measure_Z_l}) in
frequency domain, hence
\begin{align}
&  \operatorname*{Cum}\left(  Z_{0}\left(  \rho_{1}d\rho_{1}\right)
,Z_{\ell_{2}}\left(  \rho_{2}d\rho_{2}\right)  ,Z_{\ell_{3}}\left(  \rho
_{3}d\rho_{3}\right)  ,Z_{-\left(  \ell_{2}+\ell_{3}\right)  }\left(  \rho
_{4}d\rho_{4}\right)  \right)  \nonumber\\
\hspace{0.5in} &  =4\left(  -1\right)  ^{\ell_{2}+\ell_{3}}\int_{0}^{\pi}%
\frac{\delta\left(  \bigtriangleup|\rho_{1},\rho_{2},\kappa\right)  }{\rho
_{1}\kappa\sin\alpha_{2}}\cos\left(  \ell_{2}\alpha_{1}\right)  \cos\left(
\ell_{2}\alpha_{3}+\ell_{3}\beta_{2}\right)  S_{4}\left(  \alpha_{1}%
,\rho_{2:4},\beta_{2}\right)  d\beta_{2}%
%TCIMACRO{\tprod \limits_{k=1}^{4}}%
%BeginExpansion
{\textstyle\prod\limits_{k=1}^{4}}
%EndExpansion
\rho_{k}d\rho_{k}\label{Cum4_Z_L1}%
\end{align}
where $\underline{\widehat{\omega}}_{k}=\underline{\omega}_{k}/\left\vert
\underline{\omega}_{k}\right\vert =\left(  \cos\eta_{k},\sin\eta_{k}\right)  $
defines the angle $\eta_{k}$. Notice that the cumulants \newline%
$\operatorname*{Cum}\left(  X\left(  0\right)  ,X\left(  \underline{x}%
_{2}\right)  ,X\left(  \underline{x}_{3}\right)  ,X\left(  r_{4}\underline
{n}\right)  \right)  $ are given in terms of three distances and two angles
$\left(  \left(  r,\varphi\right)  _{2:3},r_{4}\right)  $, see Figure
\ref{Quadrilateral_x}.
%TCIMACRO{\FRAME{ftbphFU}{5.9819in}{3.2603in}{0pt}{\Qcb{Locations on the
%plane}}{\Qlb{Quadrilateral_x}}{quadrilateral_x.eps}%
%{\special{ language "Scientific Word";  type "GRAPHIC";
%maintain-aspect-ratio TRUE;  display "USEDEF";  valid_file "F";
%width 5.9819in;  height 3.2603in;  depth 0pt;  original-width 7.4002in;
%original-height 4.0197in;  cropleft "0";  croptop "1";  cropright "1";
%cropbottom "0";  filename '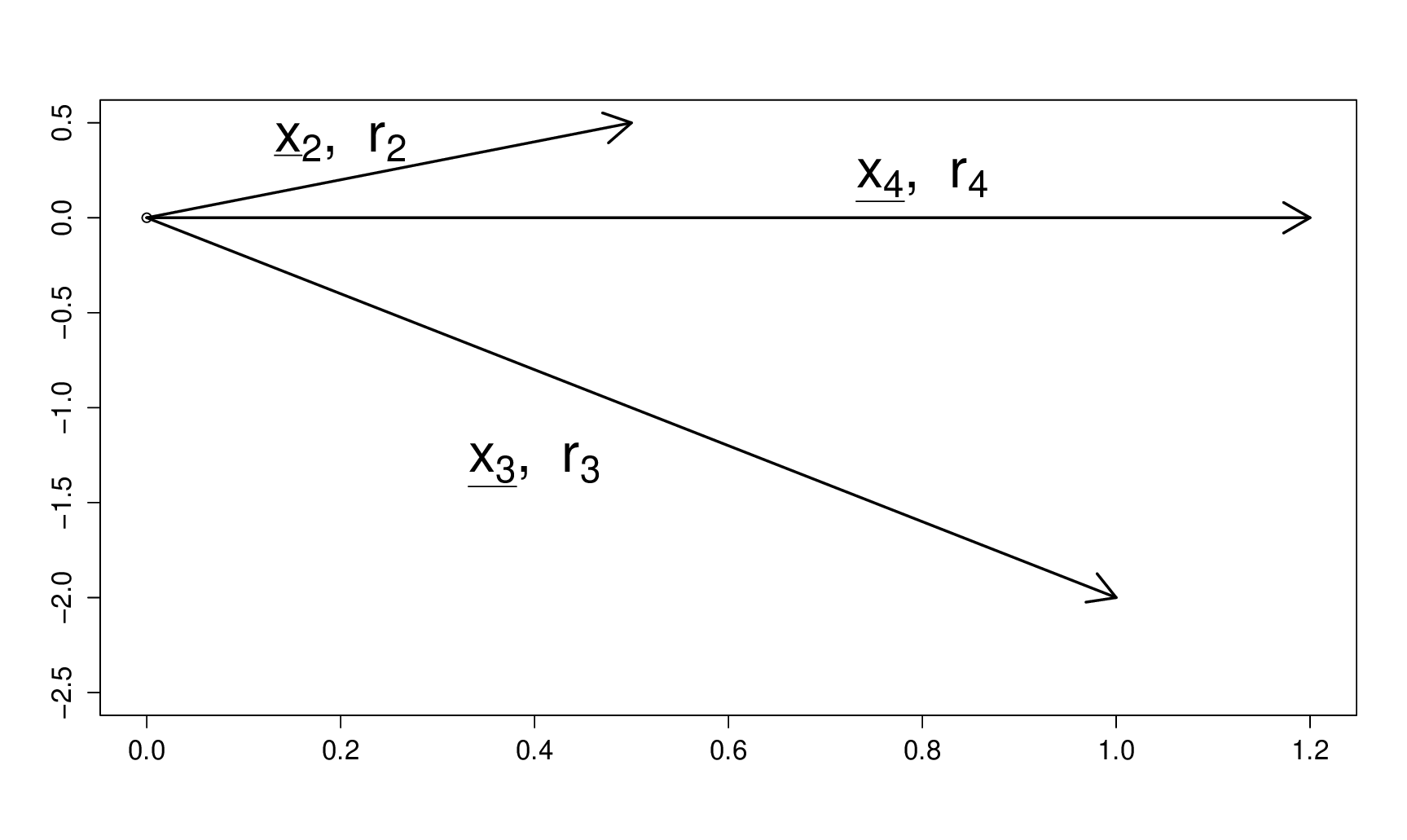';file-properties "XNPEU";}} }%
%BeginExpansion
\begin{figure}
[ptbh]
\begin{center}
\includegraphics[
natheight=4.019700in,
natwidth=7.400200in,
height=3.2603in,
width=5.9819in
]%
{}%
\caption{Locations on the plane}%
\label{Quadrilateral_x}%
\end{center}
\end{figure}
%EndExpansion
The function $\mathcal{C}_{4}\left(  \left(  \left(  r,\varphi\right)
_{2:3},r_{4}\right)  \right)  $ is expressed as
\begin{multline*}
\operatorname*{Cum}\left(  X\left(  0\right)  ,X\left(  \underline{x}%
_{2}\right)  ,X\left(  \underline{x}_{3}\right)  ,X\left(  r_{4}\underline
{n}\right)  \right)  \\
=4\sum_{\ell_{2},\ell_{3}=-\infty}^{\infty}e^{i\left(  \ell_{2}\varphi
_{2}+\ell_{3}\varphi_{3}\right)  }\iiiint\limits_{0}^{\infty}J_{\ell_{2}%
}\left(  \rho_{2}r_{2}\right)  J_{\ell_{3}}\left(  \rho_{3}r_{3}\right)
J_{\ell_{2}+\ell_{3}}\left(  \rho_{4}r_{4}\right)  \\
\times\frac{\delta\left(  \bigtriangleup|\rho_{1},\rho_{2},\kappa\right)
}{\rho_{2}\kappa\sin\alpha_{1}}e^{-i\ell_{2}\left(  \alpha_{1}-\alpha
_{3}\right)  -i\ell_{3}\beta_{2}\ }S_{4}\left(  \alpha_{1},\rho_{2:4}%
,\beta_{2}\right)  d\beta_{2}%
%TCIMACRO{\tprod \limits_{k=1}^{4}}%
%BeginExpansion
{\textstyle\prod\limits_{k=1}^{4}}
%EndExpansion
\rho_{k}d\rho_{k}\\
=4\sum_{\ell_{2},\ell_{3}=-\infty}^{\infty}e^{i\left(  \ell_{2}\varphi
_{2}+\ell_{3}\varphi_{3}\right)  }\iiint\limits_{0}^{\infty}J_{\ell_{2}%
}\left(  \rho_{2}r_{2}\right)  J_{\ell_{3}}\left(  \rho_{3}r_{3}\right)
J_{\ell_{2}+\ell_{3}}\left(  \rho_{4}r_{4}\right)  \\
\iint\limits_{0}^{\pi}\cos\left(  \ell_{2}\alpha_{1}\right)  \cos\left(
\ell_{2}\alpha_{3}+\ell_{3}\beta_{2}\right)  S_{4}\left(  \alpha_{1}%
,\rho_{2:4},\beta_{2}\right)  d\alpha_{1}d\beta_{2}%
%TCIMACRO{\tprod \limits_{k=2}^{4}}%
%BeginExpansion
{\textstyle\prod\limits_{k=2}^{4}}
%EndExpansion
\rho_{k}d\rho_{k},
\end{multline*}
$\rho_{1}d\rho_{1}=\kappa\rho_{2}\sin\left(  \alpha_{1}\right)  d\alpha_{1}$
where $\kappa=\sqrt{\rho_{3}^{2}+\rho_{4}^{2}-2\rho_{3}\rho_{4}\cos\beta_{2}}%
$, and $\alpha_{2}=\arccos\left[  \left(  \rho_{3}^{2}-\rho_{4}^{2}-\kappa
^{2}\right)  /2\kappa\rho_{4}\right]  $, hence $\alpha_{3}$ is determined by
$\rho_{3}$, $\rho_{4}$ and $\beta_{2}$. The result of the above summation is
real therefore the imaginary part is zero. \newline Now, for proving
(\ref{Trsp_tricov}) consider the integral
\begin{align*}
&  \iint\limits_{0}^{2\pi}\iiint\limits_{0}^{\infty}\mathcal{T}_{4}\left(
\left.  \alpha_{1},\rho_{2:4},\beta_{2}\right\vert \left(  r,\varphi\right)
_{2:3},r_{4}\right)  \mathcal{C}_{4}\left(  \left(  r,\varphi\right)
_{2:3},r_{4}\right)  d\varphi_{2}d\varphi_{3}%
%TCIMACRO{\tprod \limits_{k=2}^{4}}%
%BeginExpansion
{\textstyle\prod\limits_{k=2}^{4}}
%EndExpansion
r_{k}dr_{k}\\
&  =4\iint\limits_{0}^{2\pi}\iiint\limits_{0}^{\infty}\mathcal{T}_{4}\left(
\left.  \alpha_{1},\rho_{2:4},\beta_{2}\right\vert \left(  r,\varphi\right)
_{2:3},r_{4}\right)  \\
&  \iint\limits_{0}^{2\pi}\iiint\limits_{0}^{\infty}\mathcal{T}_{4}\left(
\left.  \alpha_{1}^{\prime},\rho_{2:4}^{\prime},\beta_{2}^{\prime}\right\vert
\left(  r,\varphi\right)  _{2:3},r_{4}\right)  S_{4}\left(  \alpha_{1}%
^{\prime},\rho_{2:4}^{\prime},\beta_{2}^{\prime}\right)  d\alpha_{1}^{\prime
}d\beta_{2}^{\prime}%
%TCIMACRO{\tprod \limits_{k=2}^{4}}%
%BeginExpansion
{\textstyle\prod\limits_{k=2}^{4}}
%EndExpansion
\rho_{k}^{\prime}d\rho_{k}^{\prime}d\varphi_{2}d\varphi_{3}%
%TCIMACRO{\tprod \limits_{k=2}^{4}}%
%BeginExpansion
{\textstyle\prod\limits_{k=2}^{4}}
%EndExpansion
r_{k}dr_{k}\\
&  =4\left(  2\pi\right)  ^{2}\sum_{\ell_{2},\ell_{3}=-\infty}^{\infty}%
\iiint\limits_{0}^{\infty}\iiint\limits_{0}^{\infty}J_{\ell_{2}}\left(
\rho_{2}^{\prime}r_{2}\right)  J_{\ell_{3}}\left(  \rho_{3}^{\prime}%
r_{3}\right)  J_{\ell_{2}+\ell_{3}}\left(  \rho_{4}^{\prime}r_{4}\right)
J_{\ell_{2}}\left(  \rho_{2}r_{2}\right)  J_{\ell_{3}}\left(  \rho_{3}%
r_{3}\right)  J_{\ell_{2}+\ell_{3}}\left(  \rho_{4}r_{4}\right)
%TCIMACRO{\tprod \limits_{k=2}^{4}}%
%BeginExpansion
{\textstyle\prod\limits_{k=2}^{4}}
%EndExpansion
r_{k}dr_{k}\\
&  \times\iint\limits_{0}^{\pi}\cos\left(  \ell_{2}\alpha_{1}^{\prime}\right)
\cos\left(  \ell_{2}\alpha_{3}^{\prime}+\ell_{3}\beta_{2}^{\prime}\right)
\cos\left(  \ell_{2}\alpha_{1}\right)  \cos\left(  \ell_{2}\alpha_{3}+\ell
_{3}\beta_{2}\right)  S_{4}\left(  \alpha_{1}^{\prime},\rho_{2:4}^{\prime
},\beta_{2}^{\prime}\right)  d\alpha_{1}^{\prime}d\beta_{2}^{\prime}%
%TCIMACRO{\tprod \limits_{k=2}^{4}}%
%BeginExpansion
{\textstyle\prod\limits_{k=2}^{4}}
%EndExpansion
\rho_{k}^{\prime}d\rho_{k}^{\prime}\\
&  =4\left(  2\pi\right)  ^{2}\iint\limits_{0}^{\pi}\sum_{\ell_{2},\ell
_{3}=-\infty}^{\infty}\cos\left(  \ell_{2}\alpha_{1}^{\prime}\right)
\cos\left(  \ell_{2}\alpha_{3}^{\prime}+\ell_{3}\beta_{2}^{\prime}\right)
\cos\left(  \ell_{2}\alpha_{1}\right)  \cos\left(  \ell_{2}\alpha_{3}+\ell
_{3}\beta_{2}\right)  S_{4}\left(  \alpha_{1}^{\prime},\rho_{2:4},\beta
_{2}^{\prime}\right)  d\alpha_{1}^{\prime}d\beta_{2}^{\prime}\\
&  =\left(  2\pi\right)  ^{4}S_{4}\left(  \alpha_{1},\rho_{2:4},\beta
_{2}\right)  ,
\end{align*}

To show the last equality, one can turn cosine to exponential and get the
result, since both $\beta_{2}$ and $\beta_{2}^{\prime}$ are positive,
similarly $\alpha_{3}\ $and $\alpha_{3}^{\prime}$. If $\beta_{2}=\beta
_{2}^{\prime}$, then $\alpha_{3}=\alpha_{3}^{\prime}$ follows, finally
$\alpha_{1}=\alpha_{1}^{\prime}$.
\end{proof}

\section{ Expression for Higher Order Spectra}

The Theorem \ref{Theo_trisp} can be generalized for higher order spectra. Put
$\rho_{2:p}=\left(  \rho_{2},\ldots,\rho_{p}\right)  $, $\beta_{1:p-3,2}%
=\left(  \beta_{1,2},\ldots,\beta_{p-3,2}\right)  $, $\left(  r,\varphi
\right)  _{2:p-1},=\left(  \left(  r_{2},\varphi_{2}\right)  ,\ldots,\left(
r_{p-1},\varphi_{p-1}\right)  \right)  $, and define the transformation
\begin{multline*}
\mathcal{T}_{p}\left(  \left.  \alpha_{1},\rho_{2:p},\beta_{1:p-3,2}%
\right\vert \left(  r,\varphi\right)  _{2:p-1},r_{p}\right) \\
=\sum_{\ell_{2},\ldots,\ell_{p-1}=-\infty}^{\infty}J_{\Sigma_{1}^{p-1}\ell
_{k}}\left(  \rho_{p}r_{p}\right)  \prod\limits_{k=2}^{p-1}e^{i\ell_{k}%
\varphi_{k}}J_{\ell_{k}}\left(  \rho_{k}r_{k}\right)  \cos\left(  \alpha_{k-1}%
%TCIMACRO{\tsum _{j=2}^{k-1}}%
%BeginExpansion
{\textstyle\sum_{j=2}^{k-1}}
%EndExpansion
\ell_{j}-\ell_{k}\beta_{k+1}\right)  ,
\end{multline*}
where $\alpha_{1}%
%TCIMACRO{\tsum _{j=2}^{1}}%
%BeginExpansion
{\textstyle\sum_{j=2}^{1}}
%EndExpansion
\ell_{j}=0$.

\begin{theorem}
Let $X\left(  \underline{x}\right)  $ be a homogenous and isotropic stochastic
field on the plane then%
\begin{align*}
&  \mathcal{C}_{p}\left(  \left(  r,\varphi\right)  _{2:p-1},r_{p}\right) \\
&  =2^{p-2}\int\nolimits_{0}^{\infty}\cdots\int\nolimits_{0}^{\infty}\int
_{0}^{\pi}\cdots\int_{0}^{\pi}\mathcal{T}_{p}\left(  \left.  \alpha_{1}%
,\rho_{2:p},\beta_{1:p-3,2}\right\vert \left(  r,\varphi\right)
_{2:p-1},r_{p}\right) \\
&  \times S_{p}\left(  \alpha_{1},\rho_{2:p},\beta_{1:p-3,2}\right)
%TCIMACRO{\tprod \limits_{k=2}^{p}}%
%BeginExpansion
{\textstyle\prod\limits_{k=2}^{p}}
%EndExpansion
\rho_{k}d\rho_{k}d\alpha_{1}\prod\limits_{k=1}^{p-3}d\beta_{k,2}.
\end{align*}
In return
\begin{align}
S_{p}\left(  \alpha_{1},\rho_{2:p},\beta_{1:p-3,2}\right)   &  =\frac
{1}{\left(  2\pi\right)  ^{p-2}}\int\nolimits_{0}^{\infty}\cdots
\int\nolimits_{0}^{\infty}\int_{0}^{\pi}\cdots\int_{0}^{\pi}\mathcal{T}%
_{p}\left(  \left.  \alpha_{1},\rho_{2:p},\beta_{1:p-3,2}\right\vert \left(
r,\varphi\right)  _{2:p-1},r_{p}\right) \nonumber\\
&  \times\mathcal{C}_{p}\left(  \left(  r,\varphi\right)  _{2:p-1}%
,r_{p}\right)  r_{p}dr_{p}%
%TCIMACRO{\tprod \limits_{k=2}^{p-1}}%
%BeginExpansion
{\textstyle\prod\limits_{k=2}^{p-1}}
%EndExpansion
r_{k}dr_{k}d\varphi_{k},
\end{align}
unless the integrals exist.
\end{theorem}

\begin{proof}
The argument of obtaining higher order spectra is similar to the evaluation of
the trispectrum, the only difference is that instead of using Lemma
\ref{Lemma_Jprod4} one has to use Theorem \ref{Theorem_Jprod}.
\end{proof}

\appendix

\section{Some Integrals of Bessel functions}

One of the key formula necessary for deriving an expression for the bispectrum
(see \cite{Terdik2014Publi}), is the integral
\begin{align*}
\int_{0}^{\infty}J_{0}\left(  \rho_{1}\lambda\right)  J_{\ell}\left(  \rho
_{2}\lambda\right)  J_{\ell}\left(  \rho_{3}\lambda\right)  \lambda d\lambda
&  =\frac{\cos\left(  \ell\arccos\left(  R\right)  \right)  }{\pi\rho_{2}%
\rho_{3}\sqrt{1-R^{2}}}\\
&  =\frac{\cos\left(  \ell\alpha_{1}\right)  }{\pi\rho_{2}\rho_{3}\sin
\alpha_{1}},
\end{align*}
where $\rho_{1}^{2}=\rho_{2}^{2}+\rho_{3}^{2}-2\rho_{2}\rho_{3}\cos\alpha_{1}$
and $R=\left(  \rho_{2}^{2}+\rho_{3}^{2}-\rho_{1}^{2}\right)  /\left(
2\rho_{2}\rho_{3}\right)  =\cos\alpha_{1}$ (see \cite{Prudnikov14} Tom. II,
2.12.41.16). This expression is a special case of the following result.%
%TCIMACRO{\FRAME{ftbphFU}{6.1765in}{3.3658in}{0pt}{\Qcb{Triangle}%
%}{\Qlb{Triangle}}{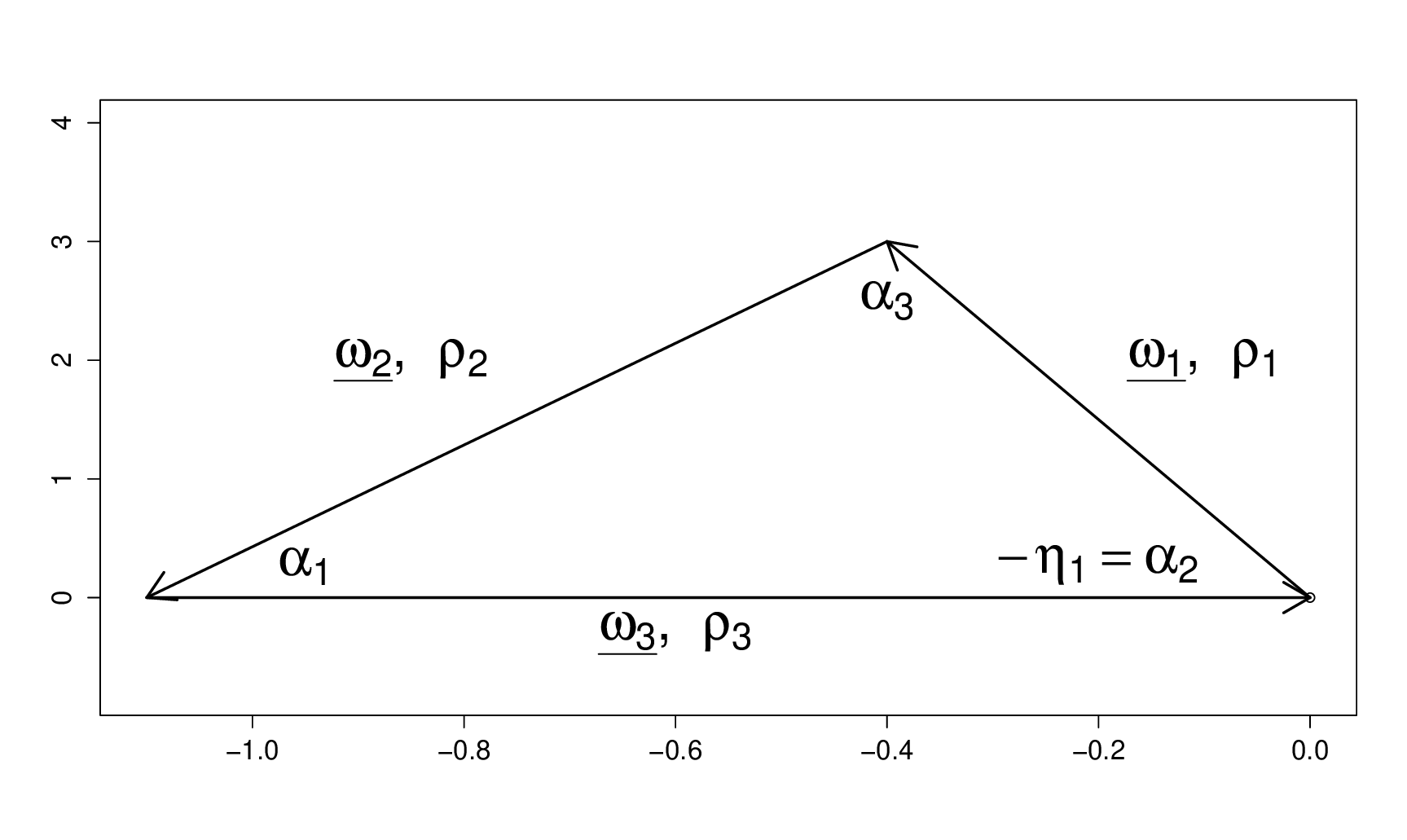}{\special{ language "Scientific Word";
%type "GRAPHIC";  maintain-aspect-ratio TRUE;  display "USEDEF";
%valid_file "F";  width 6.1765in;  height 3.3658in;  depth 0pt;
%original-width 7.4002in;  original-height 4.0197in;  cropleft "0";
%croptop "1";  cropright "1";  cropbottom "0";
%filename 'triang2.eps';file-properties "XNPEU";}} }%
%BeginExpansion
\begin{figure}
[ptbh]
\begin{center}
\includegraphics[
height=3.3658in,
width=6.1765in
]%
{}%
\caption{Triangle}%
\label{Triangle}%
\end{center}
\end{figure}
%EndExpansion

\begin{lemma}
\label{Lemma_Jprod3} Let $\rho_{k}>0$, $\left\vert \rho_{2}-\rho
_{3}\right\vert \leq\rho_{1}\leq\rho_{2}+\rho_{3}$, and $\rho_{1}^{2}=\rho
_{2}^{2}+\rho_{3}^{2}-2\rho_{2}\rho_{3}\cos\alpha_{1}$, then
\[
\int_{0}^{\infty}J_{\ell_{1}}\left(  \rho_{1}\lambda\right)  J_{\ell_{2}%
}\left(  \rho_{2}\lambda\right)  J_{\ell_{1}+\ell_{2}}\left(  \rho_{3}%
\lambda\right)  \lambda d\lambda=\frac{\cos\left(  \ell_{1}\alpha_{2}-\ell
_{2}\alpha_{1}\right)  }{\pi\rho_{2}\rho_{3}\sin\alpha_{1}},
\]
otherwise if $\rho_{1}$, $\rho_{2}\ $and $\rho_{3}$ do not form a triangle
then the integral is zero.
\end{lemma}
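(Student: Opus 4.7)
My plan is to view the integral as a Fourier representation of a two-dimensional Dirac mass supported on closed triangles with the prescribed side lengths. The starting identity is Jacobi-Anger,
\[
\int_{0}^{2\pi}e^{i\lambda\rho\cos(\theta-\eta)+i\ell\eta}d\eta = 2\pi i^{\ell}e^{i\ell\theta}J_{\ell}(\lambda\rho),
\]
combined with the planar Fourier representation $\delta(\underline{v})=(2\pi)^{-2}\int_{\mathbb{R}^{2}}e^{i\underline{v}\cdot\underline{y}}d\underline{y}$ written in polar form $\underline{y}=\lambda(\cos\theta,\sin\theta)$. Applying both to
$\iiint_{[0,2\pi]^{3}}e^{i\sum\ell_{k}\eta_{k}}\delta(\sum_{k}\rho_{k}\underline{\widehat{\omega}}_{k})\prod d\eta_{k}$
gives $(2\pi)^{2}i^{\sum\ell_{k}}\int_{0}^{\infty}\prod J_{\ell_{k}}(\rho_{k}\lambda)\lambda d\lambda$ when $\sum\ell_{k}=0$, and $0$ otherwise (the latter directly from the rotational invariance of the Dirac mass). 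Choosing $\ell_{3}=-(\ell_{1}+\ell_{2})$ and using $J_{-n}(x)=(-1)^{n}J_{n}(x)$ converts the left-hand side of the lemma into
\[
\frac{(-1)^{\ell_{1}+\ell_{2}}}{(2\pi)^{2}}\iiint_{[0,2\pi]^{3}}e^{i\ell_{1}\eta_{1}+i\ell_{2}\eta_{2}-i(\ell_{1}+\ell_{2})\eta_{3}}\delta\!\Bigl(\sum_{k=1}^{3}\rho_{k}\underline{\widehat{\omega}}_{k}\Bigr)\prod_{k=1}^{3}d\eta_{k}.
\]

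Next I would evaluate this configuration integral geometrically. The support of the Dirac mass is nonempty if and only if $(\rho_{1},\rho_{2},\rho_{3})$ satisfies the triangle inequalities, proving the second assertion of the lemma. When the inequalities hold, the integrand is invariant under a common shift of all $\eta_{k}$ (because $\sum\ell_{k}=0$), so I would fix $\eta_{3}=0$ at the price of an overall factor $2\pi$. The remaining constraint $\rho_{1}\underline{\widehat{\omega}}_{1}+\rho_{2}\underline{\widehat{\omega}}_{2}=(-\rho_{3},0)$ defines a planar map of Jacobian $\rho_{1}\rho_{2}\sin(\eta_{2}-\eta_{1})$ with exactly two pre-images of $(-\rho_{3},0)$: a solution $(\eta_{1}^{\ast},\eta_{2}^{\ast})$ and its mirror image $(-\eta_{1}^{\ast},-\eta_{2}^{\ast})$ across the $x$-axis. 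Their contributions combine to $2\cos(\ell_{1}\eta_{1}^{\ast}+\ell_{2}\eta_{2}^{\ast})/[\rho_{1}\rho_{2}|\sin(\eta_{2}^{\ast}-\eta_{1}^{\ast})|]$.

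The elementary identity $\rho_{1}\rho_{2}|\sin(\eta_{2}^{\ast}-\eta_{1}^{\ast})|=2\,\mathrm{Area}(\triangle)=\rho_{2}\rho_{3}\sin\gamma_{3}$ produces the announced $\pi\rho_{2}\rho_{3}\sin\gamma_{3}$ in the denominator. The main obstacle is the final angle identification: placing $\underline{\omega}_{3}=(\rho_{3},0)$ on the positive $x$-axis, one identifies $\pi-\eta_{1}^{\ast}$ and $\eta_{2}^{\ast}-\pi$ with the interior angles $\gamma_{1}$ and $\gamma_{3}$ of the triangle at the vertices $\underline{0}$ and $-\underline{\omega}_{3}$, respectively (cf.\ Figure \ref{Triangle}). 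Substituting gives $\cos(\ell_{1}\eta_{1}^{\ast}+\ell_{2}\eta_{2}^{\ast})=(-1)^{\ell_{1}+\ell_{2}}\cos(\ell_{1}\gamma_{1}-\ell_{2}\gamma_{3})$, which neatly absorbs the $(-1)^{\ell_{1}+\ell_{2}}$ factor from $J_{-(\ell_{1}+\ell_{2})}$ and yields the stated formula. The hardest step is this last sign-and-angle bookkeeping; the rest is essentially mechanical Fourier analysis.
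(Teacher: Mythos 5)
Your argument is correct, but it runs in essentially the opposite logical direction from the paper's. The paper proves the lemma by combining Graf's addition theorem, $e^{i\ell_{1}\gamma_{1}}J_{\ell_{1}}(\rho_{1}\lambda)=\sum_{m}J_{m}(\rho_{2}\lambda)J_{m+\ell_{1}}(\rho_{3}\lambda)e^{im\gamma_{3}}$, with Fourier extraction of the $m=\ell_{2}$ coefficient on $[0,\pi]$ and the Hankel closure relation $\int_{0}^{\infty}J_{\ell}(\rho r)J_{\ell}(\kappa r)r\,dr=\delta(\rho-\kappa)/\rho$; the triple-Bessel integral is then collapsed by the one-dimensional delta in $\rho$. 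Only afterwards, in Appendix B, does the paper use the lemma to evaluate $\delta(\Sigma\rho_{k}\widehat{\underline{\omega}}_{k})$ in polar coordinates. You instead take that planar delta function as the starting object, reduce the Bessel integral to the configuration integral $\iiint e^{i\Sigma\ell_{k}\eta_{k}}\delta(\Sigma\rho_{k}\widehat{\underline{\omega}}_{k})\prod d\eta_{k}$ via Jacobi--Anger, and evaluate it directly by the co-area formula: two mirror-image solutions, Jacobian $\rho_{1}\rho_{2}\sin(\eta_{2}-\eta_{1})=2\,\mathrm{Area}=\rho_{2}\rho_{3}\sin\gamma_{3}$. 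Your route buys a transparent geometric explanation of why the support is exactly the set of triangles (the vanishing statement is immediate, rather than quoted from Vilenkin as in the paper), and it avoids Graf's theorem entirely; the paper's route generalizes more mechanically to the quadrilateral and multilateral cases (Lemma \ref{Lemma_Jprod4}, Theorem \ref{Theorem_Jprod}) by iterating the addition theorem, whereas your method would require re-parametrizing a higher-dimensional constraint surface. The normalization checks out: $(2\pi)^{3}/(2\pi)^{2}$ from Jacobi--Anger times $2\pi$ from the $\theta$-integral gives $(2\pi)^{2}$, the shift invariance contributes $2\pi$, and the two preimages contribute the factor $2$, yielding $1/\pi$ overall. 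The one step you should write out in full is the angle identification $\gamma_{1}=\pi-\eta_{1}^{\ast}$, $\gamma_{3}=\eta_{2}^{\ast}-\pi$: it does produce $\cos(\ell_{1}\gamma_{1}-\ell_{2}\gamma_{3})=(-1)^{\ell_{1}+\ell_{2}}\cos(\ell_{1}\eta_{1}^{\ast}+\ell_{2}\eta_{2}^{\ast})$ as required (the sine term drops since $\sin((\ell_{1}+\ell_{2})\pi)=0$), but since the paper's angle convention is explicitly nonstandard (the $\gamma_{k}$ are not opposite the $\rho_{k}$), this bookkeeping is where a sign error would hide and it deserves more than an assertion.
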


\begin{proof}
Formulae for integrals of Bessel functions require care and attention, see for
instance \cite{Vilenkin1968} p224 and the Addition Theorem \cite{Korenev2002},
p27. The assumptions imply that a triangle according to $\left(  \rho_{1}%
,\rho_{2},\rho_{3}\right)  $ can be formed, see Figure \ref{Triangle}.
Formally the following relations are valid; $\rho_{1}^{2}=\rho_{2}^{2}%
+\rho_{3}^{2}-2\rho_{2}\rho_{3}\cos\alpha_{1}$, $\rho_{2}-\rho_{3}\cos
\alpha_{1}=\rho_{1}\cos\alpha_{3}$, $\rho_{3}\sin\alpha_{1}=\rho_{1}\sin
\alpha_{3}$, \cite{Erdelyi11b}, T2, p54 , equivalently, $\sqrt{4\rho_{2}%
^{2}\rho_{3}^{2}-\left(  \rho^{2}-\rho_{2}^{2}-\rho_{3}^{2}\right)  ^{2}%
}=2\rho_{2}\rho_{3}\sin\alpha_{1}$, $\alpha_{1}\in\left(  0,\pi\right)  $. Let
us start with the Graf's Addition Theorem%
\[
e^{i\ell_{1}\alpha_{2}}J_{\ell_{1}}\left(  \rho_{1}\right)  =\sum_{m=-\infty
}^{\infty}J_{m}\left(  \rho_{2}\right)  J_{m+\ell_{1}}\left(  \rho_{3}\right)
e^{im\alpha_{1}}.
\]
The system $e^{im\alpha}$ is orthogonal on the $\left[  0,2\pi\right]  $, but
the angle $\alpha_{1}$ is changing on interval $\left[  0,\pi\right]  $, in
this case we have the integral
\[
\int_{0}^{\pi}e^{i\left(  m-\ell_{2}\right)  \alpha}d\alpha=\left\{
\begin{array}
[c]{ccc}%
\pi & \text{if } & m=\ell_{2},\\
\frac{i}{m-\ell_{2}}\left(  1-\left(  -1\right)  ^{m-\ell_{2}}\right)  &
\text{if } & m\neq\ell_{2},
\end{array}
\right.
\]
hence
\begin{multline*}
\int_{0}^{\pi}e^{i\ell_{1}\alpha_{2}}J_{\ell_{1}}\left(  \rho_{1}\right)
e^{-i\ell_{2}\alpha_{1}}d\alpha_{1}=\int_{0}^{\pi}\sum_{m=-\infty}^{\infty
}J_{m}\left(  \rho_{2}\right)  J_{m+\ell_{1}}\left(  \rho_{3}\right)
e^{i\left(  m-\ell_{2}\right)  \alpha_{1}}d\alpha_{1}\\
=\pi J_{\ell_{2}}\left(  \rho_{2}\right)  J_{\ell_{1}+\ell_{2}}\left(
\rho_{3}\right)  +2i\sum_{k=-\infty}^{\infty}\frac{1}{2k+1}J_{2k+1+\ell_{2}%
}\left(  \rho_{2}\right)  J_{2k+1+\ell_{1}+\ell_{2}}\left(  \rho_{3}\right)  .
\end{multline*}
The real part of the above equality provides
\begin{equation}
\int_{0}^{\pi}\cos\left(  \ell_{1}\alpha_{2}-\ell_{2}\alpha_{1}\right)
J_{\ell_{1}}\left(  \lambda\rho_{1}\right)  d\alpha_{1}=\pi J_{\ell_{2}%
}\left(  \lambda\rho_{2}\right)  J_{\ell_{1}+\ell_{2}}\left(  \lambda\rho
_{3}\right)  . \label{equ_Bess2}%
\end{equation}
Now, integrate over $\lambda d\lambda$ and applying the formula
(\ref{Dirac_Bessel}) we get
\begin{align*}
&  \hspace{-1in}\int_{0}^{\infty}J_{\ell_{1}}\left(  \rho_{1}\lambda\right)
J_{\ell_{2}}\left(  \rho_{2}\lambda\right)  J_{\ell_{1}+\ell_{2}}\left(
\rho_{3}\lambda\right)  \lambda d\lambda\\
&  =\int_{0}^{\infty}J_{\ell_{1}}\left(  \rho_{1}\lambda\right)  \frac{1}{\pi
}\int_{0}^{\pi}\cos\left(  \ell_{1}\alpha_{2}-\ell_{2}\gamma\right)
J_{\ell_{1}}\left(  \rho\lambda\right)  d\gamma\lambda d\lambda\\
&  =\frac{1}{\pi}\int_{\left\vert \rho_{2}-\rho_{3}\right\vert }^{\rho
_{2}+\rho_{3}}\int_{0}^{\infty}J_{\ell_{1}}\left(  \rho_{1}\lambda\right)
J_{\ell_{1}}\left(  \rho\lambda\right)  \lambda d\lambda\frac{\cos\left(
\ell_{1}\alpha_{2}-\ell_{2}\gamma\right)  \rho d\rho}{\rho_{2}\rho_{3}%
\sin\gamma}\\
&  =\frac{1}{\pi}\int_{\left\vert \rho_{2}-\rho_{3}\right\vert }^{\rho
_{2}+\rho_{3}}\frac{\cos\left(  \ell_{1}\alpha_{2}-\ell_{2}\gamma\right)
}{\rho_{2}\rho_{3}\sin\gamma}\frac{\delta\left(  \rho_{1}-\rho\right)  }%
{\rho_{1}}\rho d\rho\\
&  =\frac{\cos\left(  \ell_{1}\alpha_{2}-\ell_{2}\alpha_{1}\right)  }{\pi
\rho_{2}\rho_{3}\sin\alpha_{1}}.
\end{align*}
The integral is zero if the inequality $\left\vert \rho_{2}-\rho
_{3}\right\vert \leq\rho_{1}\leq\rho_{2}+\rho_{3}$ is not satisfied,
\cite{Vilenkin1968} p. 224.
\end{proof}

We consider a quadrilateral according to the wave numbers $\left(  \alpha
_{1},\rho_{2},\rho_{3},\rho_{4}\right)  $ defined by two triangles $\left(
\rho_{1},\rho_{2},\kappa\right)  $ and $\left(  \kappa,\rho_{3},\rho
_{4}\right)  $ where $\kappa=\left\vert \underline{\kappa}\right\vert $ is the
diagonal and $\rho_{j}=\left\vert \underline{\omega}_{j}\right\vert $, see
Figure \ref{Quadrilateral}. In other words $\left(  \underline{\omega}%
_{1},\underline{\omega}_{2},\underline{\kappa}\right)  $ and $\left(
\underline{\omega}_{3},\underline{\omega}_{4},-\underline{\kappa}\right)  $
are triangulars and their sides $\left(  \rho_{1},\rho_{2},\kappa\right)  $
and $\left(  \kappa,\rho_{3},\rho_{4}\right)  $ fulfil the triangle relation,
i.e. the assumption
\[
\max\left(  \left\vert \rho_{2}-\rho_{1}\right\vert ,\left\vert \rho_{4}%
-\rho_{3}\right\vert \right)  <\kappa<\min\left(  \rho_{1}+\rho_{2},\rho
_{3}+\rho_{4}\right)  ,
\]
fulfils, see Figure \ref{Quadrilateral}.
%TCIMACRO{\FRAME{ftbphFU}{6.7724in}{3.6936in}{0pt}{\Qcb{Quadrilateral}%
%}{\Qlb{Quadrilateral}}{quadrilateral_n.eps}%
%{\special{ language "Scientific Word";  type "GRAPHIC";
%maintain-aspect-ratio TRUE;  display "USEDEF";  valid_file "F";
%width 6.7724in;  height 3.6936in;  depth 0pt;  original-width 7.4201in;
%original-height 4.0326in;  cropleft "0";  croptop "1";  cropright "1";
%cropbottom "0";  filename '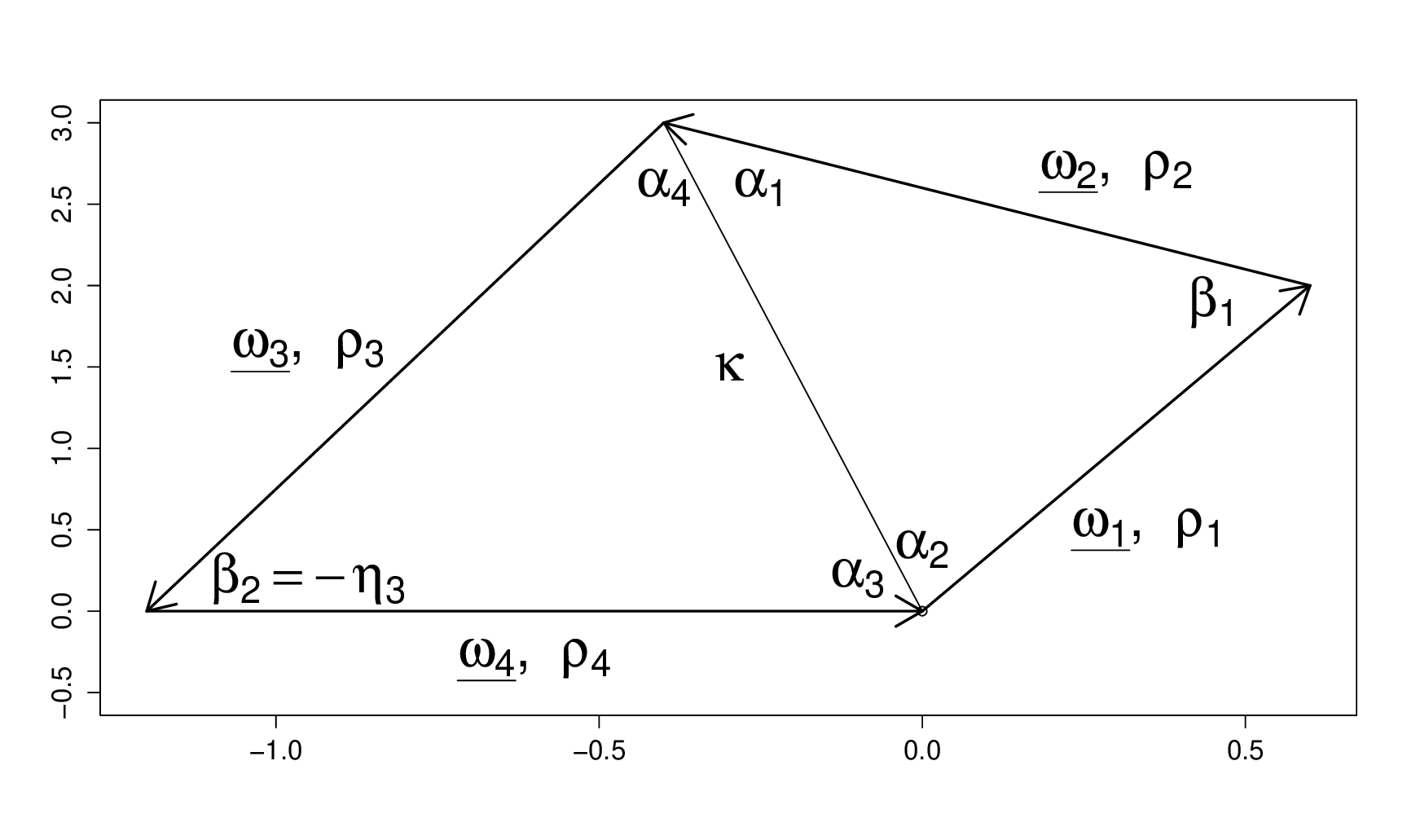';file-properties "XNPEU";}} }%
%BeginExpansion
\begin{figure}
[ptbh]
\begin{center}
\includegraphics[
height=3.6936in,
width=6.7724in
]%
{}%
\caption{Quadrilateral}%
\label{Quadrilateral}%
\end{center}
\end{figure}
%EndExpansion

\begin{lemma}
\label{Lemma_Jprod4} Assume $\kappa^{2}=\rho_{3}^{2}+\rho_{4}^{2}-2\rho
_{3}\rho_{4}\cos\beta_{2}$, $\beta_{2}\in\left(  0,\pi\right)  $ and $\left(
\rho_{1},\rho_{2},\kappa\right)  $ defines a triangle, see Figure
\ref{Quadrilateral}, then
\begin{multline*}
\int_{0}^{\infty}J_{\ell_{1}}\left(  \rho_{1}\lambda\right)  J_{\ell_{2}%
}\left(  \rho_{2}\lambda\right)  J_{\ell_{3}}\left(  \rho_{3}\lambda\right)
J_{\ell_{1}+\ell_{2}+\ell_{3}}\left(  \rho_{4}\lambda\right)  \lambda
d\lambda\\
=\frac{1}{\pi^{2}}\int_{0}^{\pi}\cos\left(  \left(  \ell_{1}+\ell_{2}\right)
\alpha_{3}-\ell_{3}\beta_{2}\right)  \frac{\cos\left(  \ell_{1}\alpha_{2}%
-\ell_{2}\alpha_{1}\right)  }{\rho_{2}\kappa\sin\alpha_{1}}\chi\left(
\bigtriangleup|\rho_{1},\rho_{2},\kappa\right)  d\beta_{2},
\end{multline*}
where the notations correspond to the Figure \ref{Quadrilateral}, and
$\chi\left(  \bigtriangleup|\rho_{1},\rho_{2},\kappa\right)  $ is zero if
$\left(  \rho_{1},\rho_{2},\kappa\right)  $ does not form a triangle,
otherwise it is $1$.
\end{lemma}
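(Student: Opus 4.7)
The plan is to reduce the four-Bessel integral to two applications of Lemma \ref{Lemma_Jprod3}, exploiting the decomposition of the quadrilateral along the diagonal $\kappa$ into two triangles $(\rho_1,\rho_2,\kappa)$ and $(\kappa,\rho_3,\rho_4)$. The matching of Bessel indices $\ell_1,\ell_2,\ell_3,\ell_1+\ell_2+\ell_3$ to $\kappa$ via index $\ell_1+\ell_2$ on both sides of the diagonal is exactly what makes the ``sum'' condition of Lemma \ref{Lemma_Jprod3} compatible on each triangle.

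First I would record, as an auxiliary identity, the intermediate formula that appears within the proof of Lemma \ref{Lemma_Jprod3}, namely
\[
J_{\ell_2}(\lambda\rho_2)\,J_{\ell_1+\ell_2}(\lambda\rho_3) \;=\; \frac{1}{\pi}\int_0^{\pi}\cos(\ell_1\gamma_1-\ell_2\gamma_3)\,J_{\ell_1}(\lambda\rho_1)\,d\gamma_3,
\]
valid for any triangle $(\rho_1,\rho_2,\rho_3)$ with $\gamma_3$ the angle between $\rho_2$ and $\rho_3$ and $\gamma_1$ the opposite interior angle. I would apply this identity to the triangle $(\kappa,\rho_3,\rho_4)$ with the substitution $\ell_1\mapsto\ell_1+\ell_2$, $\ell_2\mapsto\ell_3$, and the angle $\gamma_4$ between $\rho_3$ and $\rho_4$, $\alpha_2$ being the angle opposite $\rho_3$ in that triangle, so that $\kappa^2=\rho_3^2+\rho_4^2-2\rho_3\rho_4\cos\gamma_4$. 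This produces the factorisation
\[
J_{\ell_3}(\lambda\rho_3)\,J_{\ell_1+\ell_2+\ell_3}(\lambda\rho_4) \;=\; \frac{1}{\pi}\int_0^{\pi}\cos\bigl((\ell_1+\ell_2)\alpha_2-\ell_3\gamma_4\bigr)\,J_{\ell_1+\ell_2}(\lambda\kappa)\,d\gamma_4.
\]

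Substituting this expression into the left-hand side of the lemma and interchanging the $\gamma_4$- and $\lambda$-integrations (the same formal Fubini step that justifies the $\delta$-function manipulations in the proof of Lemma \ref{Lemma_Jprod3}), the inner $\lambda$-integral becomes
\[
\int_0^{\infty} J_{\ell_1}(\rho_1\lambda)\,J_{\ell_2}(\rho_2\lambda)\,J_{\ell_1+\ell_2}(\kappa\lambda)\,\lambda\,d\lambda,
\]
which is exactly the three-Bessel integral covered by Lemma \ref{Lemma_Jprod3} applied to the subtriangle $(\rho_1,\rho_2,\kappa)$. Its value is $\cos(\ell_1\alpha_1-\ell_2\beta_3)/(\pi\rho_2\kappa\sin\beta_3)$, where $\alpha_1$ and $\beta_3$ are the angles of the subtriangle labelled as in Figure \ref{Quadrilateral}. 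Assembling the two factors reproduces the claimed identity.

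The main obstacle is the bookkeeping of angles and supports: one must check that for each $\gamma_4\in(0,\pi)$ the diagonal $\kappa=\sqrt{\rho_3^2+\rho_4^2-2\rho_3\rho_4\cos\gamma_4}$ automatically lies in $[|\rho_1-\rho_2|,\rho_1+\rho_2]$ whenever $(\rho_1,\rho_2,\rho_3,\rho_4)$ forms a valid quadrilateral, so that the inner three-Bessel integral is non-vanishing on exactly the right range (otherwise Lemma \ref{Lemma_Jprod3} returns zero, which is correct but must not silently cut off the outer integration). A secondary technical point is the Fubini swap with a conditionally convergent $\lambda$-integral, which should be interpreted in the same distributional sense used in the orthogonality relation $\int_0^\infty J_\ell(\rho_1\lambda)J_\ell(\rho\lambda)\lambda\,d\lambda=\delta(\rho-\rho_1)/\rho_1$ already invoked in the proof of Lemma \ref{Lemma_Jprod3}; no new analytic ingredient is needed.
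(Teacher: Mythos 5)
Your proposal is correct and follows essentially the same route as the paper: factor $J_{\ell_3}(\rho_3\lambda)J_{\ell_1+\ell_2+\ell_3}(\rho_4\lambda)$ via the Graf-addition identity into an integral over the diagonal triangle $(\kappa,\rho_3,\rho_4)$ carrying $J_{\ell_1+\ell_2}(\kappa\lambda)$, swap the order of integration, and evaluate the remaining three-Bessel integral over the subtriangle $(\rho_1,\rho_2,\kappa)$ by Lemma \ref{Lemma_Jprod3}. The only cosmetic difference is that the paper parametrizes the outer integral by $\kappa\in(|\rho_3-\rho_4|,\rho_3+\rho_4)$ with Jacobian $\kappa\,d\kappa=\rho_3\rho_4\sin(\gamma_4)\,d\gamma_4$ before converting back to $\gamma_4$, whereas you work in the angle variable throughout.
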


\begin{proof}
The equation (\ref{equ_Bess2}) and Lemma \ref{Lemma_Jprod3} give
\begin{align*}
J_{\ell_{3}}\left(  \rho_{3}\lambda\right)  J_{\ell_{1}+\ell_{2}+\ell_{3}%
}\left(  \rho_{4}\lambda\right)   &  =\frac{1}{\pi}\int_{\left\vert \rho
_{4}-\rho_{3}\right\vert }^{\rho_{3}+\rho_{4}}J_{\ell_{1}+\ell_{2}}\left(
\kappa\lambda\right)  \frac{\cos\left(  \left(  \ell_{1}+\ell_{2}\right)
\alpha_{3}-\ell_{3}\beta_{2}\right)  \kappa d\kappa}{\rho_{3}\rho_{4}\sin
\beta_{2}},\\
\int_{0}^{\infty}J_{\ell_{1}}\left(  \rho_{1}\lambda\right)  J_{\ell_{2}%
}\left(  \rho_{2}\lambda\right)  J_{\ell_{1}+\ell_{2}}\left(  \kappa
\lambda\right)  \lambda d\lambda &  =\frac{\cos\left(  \ell_{1}\alpha_{2}%
-\ell_{2}\alpha_{1}\right)  }{\pi\rho_{1}\kappa\sin\alpha_{2}}\chi\left(
\bigtriangleup|\rho_{1},\rho_{2},\kappa\right)  ,
\end{align*}
hence
\begin{align*}
&  \int_{0}^{\infty}J_{\ell_{1}}\left(  \rho_{1}\lambda\right)  J_{\ell_{2}%
}\left(  \rho_{2}\lambda\right)  J_{\ell_{3}}\left(  \rho_{3}\lambda\right)
J_{\ell_{1}+\ell_{2}+\ell_{3}}\left(  \rho_{4}\lambda\right)  \lambda
d\lambda\\
&  =\frac{1}{\pi}\int_{\left\vert \rho_{4}-\rho_{3}\right\vert }^{\rho
_{4}+\rho_{3}}\frac{\cos\left(  \left(  \ell_{1}+\ell_{2}\right)  \alpha
_{3}-\ell_{3}\beta_{2}\right)  }{\rho_{3}\rho_{4}\sin\beta_{2}}\int
_{0}^{\infty}J_{\ell_{1}}\left(  \rho_{1}\lambda\right)  J_{\ell_{2}}\left(
\rho_{2}\lambda\right)  J_{\ell_{1}+\ell_{2}}\left(  \kappa\lambda\right)
\lambda d\lambda\kappa d\kappa\\
&  =\frac{1}{\pi^{2}}\int_{\left\vert \rho_{4}-\rho_{3}\right\vert }^{\rho
_{4}+\rho_{3}}\frac{\cos\left(  \left(  \ell_{1}+\ell_{2}\right)  \alpha
_{3}-\ell_{3}\beta_{2}\right)  }{\rho_{3}\rho_{4}\sin\beta_{2}}\frac
{\cos\left(  \ell_{1}\alpha_{2}-\ell_{2}\alpha_{1}\right)  }{\rho_{1}%
\kappa\sin\alpha_{2}}\chi\left(  \bigtriangleup|\rho_{1},\rho_{2}%
,\kappa\right)  \kappa d\kappa\\
&  =\frac{1}{\pi^{2}}\int_{0}^{\pi}\cos\left(  \left(  \ell_{1}+\ell
_{2}\right)  \alpha_{3}-\ell_{3}\beta_{2}\right)  \frac{\cos\left(  \ell
_{1}\alpha_{2}-\ell_{2}\alpha_{1}\right)  }{\rho_{2}\kappa\sin\alpha_{1}}%
\chi\left(  \bigtriangleup|\rho_{1},\rho_{2},\kappa\right)  d\beta_{2},
\end{align*}
where $\sqrt{\left(  2\rho_{3}\rho_{4}\right)  ^{2}-\left(  \kappa^{2}%
-\rho_{3}^{2}-\rho_{4}^{2}\right)  ^{2}}=2\rho_{3}\rho_{4}\sin\beta_{2}$,
$\kappa d\kappa=\rho_{3}\rho_{4}\sin\left(  \beta_{2}\right)  d\beta_{2}$,
$\rho_{1}\sin\beta_{1}=\kappa\sin\alpha_{1}$, see Figure \ref{Quadrilateral}.
Note that if we are given wave numbers $\left(  \rho_{1},\rho_{2},\rho
_{3},\rho_{4}\right)  $ and if $\kappa$ changes then not only $\beta_{2}$ will
change but all the angles as well.
\end{proof}

For further generalization of Lemma \ref{Lemma_Jprod3} we consider
multilaterals on the plane. A multilateral of order $5$, say, has $5$ vertices
and $2$ diagonals, see Figure \ref{multilateral}. Invariants under the motion
of a rigid body are the angles the length of the sides and diagonals. The
multilateral will be well defined if the length of the sides and diagonals are
given, one may replace the diagonals by the angle opposite them. For instance
the $\kappa_{2}=\left\vert \underline{\kappa}_{2}\right\vert $ and angle
$\beta_{2,2}$ are equivalent in determining the triangle together with sides
$\rho_{4}=\left\vert \underline{\omega}_{4}\right\vert $ and $\rho
_{5}=\left\vert \underline{\omega}_{5}\right\vert $.
%TCIMACRO{\FRAME{ftbphFU}{6.3218in}{3.4454in}{0pt}{\Qcb{Multilateral}%
%}{\Qlb{multilateral}}{multiilateral_n5.eps}%
%{\special{ language "Scientific Word";  type "GRAPHIC";
%maintain-aspect-ratio TRUE;  display "USEDEF";  valid_file "F";
%width 6.3218in;  height 3.4454in;  depth 0pt;  original-width 7.4002in;
%original-height 4.0197in;  cropleft "0";  croptop "1";  cropright "1";
%cropbottom "0";  filename '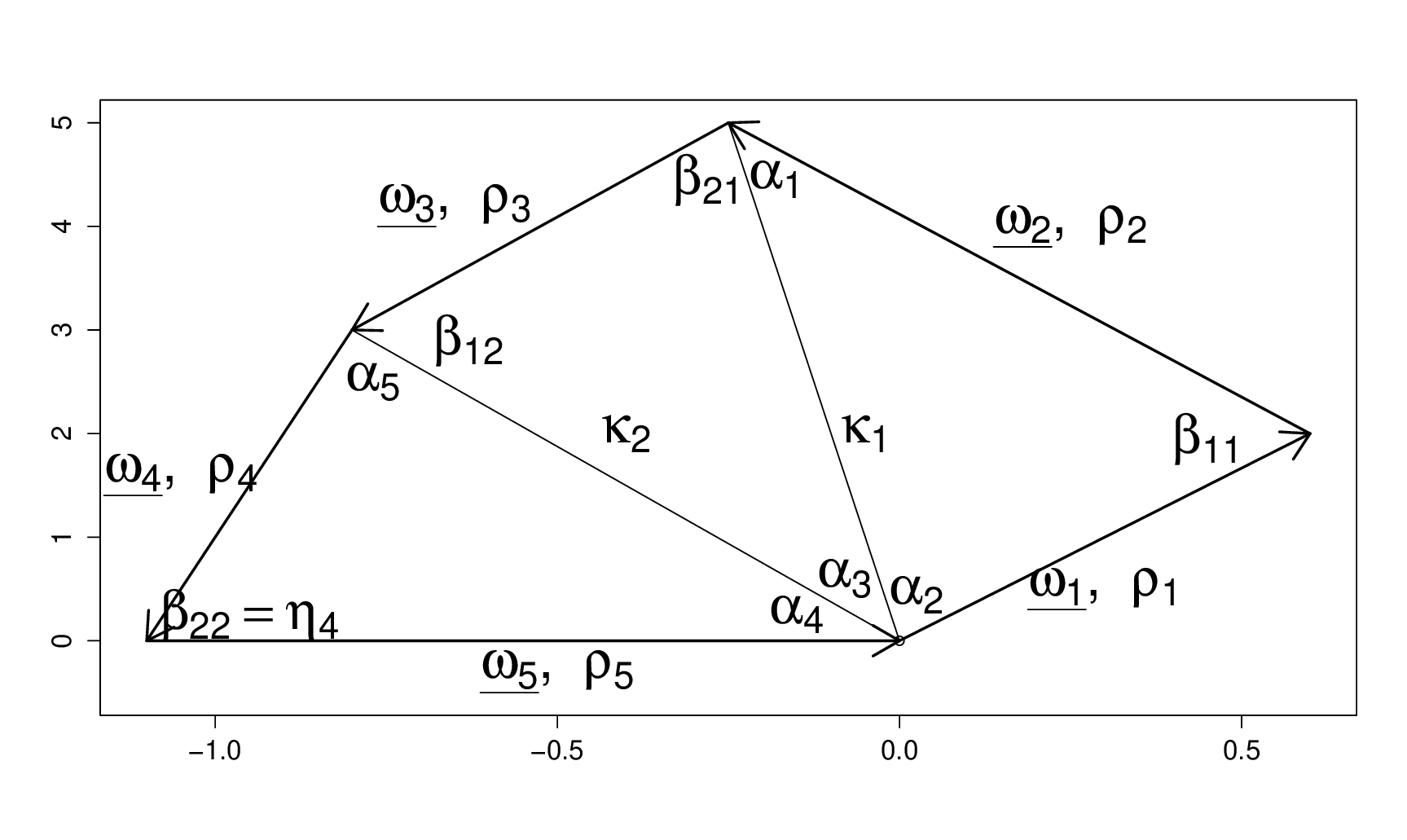';file-properties "XNPEU";}} }%
%BeginExpansion
\begin{figure}
[ptbh]
\begin{center}
\includegraphics[
height=3.4454in,
width=6.3218in
]%
{}%
\caption{Multilateral}%
\label{multilateral}%
\end{center}
\end{figure}
%EndExpansion

\begin{theorem}
\label{Theorem_Jprod} Let $p\geq4$ and consider a multilateral of order $p$
then
\begin{multline*}
\int_{0}^{\infty}J_{\Sigma_{1}^{p-1}\ell_{k}}\left(  \rho_{p}\lambda\right)
\prod\limits_{k=1}^{p-1}J_{\ell_{k}}\left(  \rho_{k}\lambda\right)  \lambda
d\lambda\\
=\frac{1}{\pi^{p-2}}\int_{0}^{\pi}\cdots\int_{0}^{\pi}\frac{\cos\left(
\ell_{1}\alpha_{2}-\ell_{2}\alpha_{1}\right)  }{\rho_{2}\kappa_{1}\sin\left(
\alpha_{1}\right)  }\prod\limits_{k=2}^{p-2}\cos\left(  \alpha_{k+1}%
%TCIMACRO{\tsum _{j=1}^{k}}%
%BeginExpansion
{\textstyle\sum_{j=1}^{k}}
%EndExpansion
\ell_{j}-\ell_{k+1}\beta_{k-1,2}\right)  \chi\left(  \bigtriangleup|\rho
_{k+2},\kappa_{k},\kappa_{k+1}\right)  d\beta_{k-1,2},
\end{multline*}
where each angle $\alpha_{k}$ is opposite to $\rho_{k}$ and angles
$\beta_{k,1}$, $\beta_{k,2}$ are opposite to diagonal $\kappa_{k}$ on the
right and on the left respectively, see Figure \ref{multilateral} for notations.
\end{theorem}

\begin{proof}
A multilateral can be split up $p-2$ triangles, see Figure \ref{multilateral}.
We show that from $p=4$ follows $p=5$, such that the pattern of general
induction shows up. By the Addition Theorem we have
\[
J_{\ell_{4}}\left(  \rho_{4}\lambda\right)  J_{\ell_{1}+\ell_{2}+\ell_{3}%
+\ell_{4}}\left(  \rho_{5}\lambda\right)  =\frac{1}{\pi}\int_{0}^{\pi}%
J_{\ell_{1}+\ell_{2}+\ell_{3}}\left(  \kappa_{2}\lambda\right)  \cos\left(
\left(  \ell_{1}+\ell_{2}+\ell_{3}\right)  \alpha_{4}-\ell_{4}\beta
_{2,2}\right)  d\beta_{2,2},
\]
and the result of Lemma \ref{Lemma_Jprod4} leads us to the formula
\begin{multline*}
\int_{0}^{\infty}J_{\ell_{1}}\left(  \rho_{1}\lambda\right)  J_{\ell_{2}%
}\left(  \rho_{2}\lambda\right)  J_{\ell_{3}}\left(  \rho_{3}\lambda\right)
J_{\ell_{1}+\ell_{2}+\ell_{3}}\left(  \kappa_{2}\lambda\right)  \lambda
d\lambda\\
=\frac{1}{\pi^{2}}\int_{0}^{\pi}\cos\left(  \left(  \ell_{1}+\ell_{2}\right)
\alpha_{3}-\ell_{3}\beta_{1,2}\right)  \cos\left(  \ell_{1}\alpha_{2}-\ell
_{2}\alpha_{1}\right)  \frac{d\beta_{1,2}}{\rho_{2}\kappa_{1}\sin\alpha_{1}}%
\end{multline*}
hence we obtain%
\begin{align*}
&  \int_{0}^{\infty}J_{\ell_{1}}\left(  \rho_{1}\lambda\right)  J_{\ell_{2}%
}\left(  \rho_{2}\lambda\right)  J_{\ell_{3}}\left(  \rho_{3}\lambda\right)
J_{\ell_{4}}\left(  \rho_{4}\lambda\right)  J_{\ell_{1}+\ell_{2}+\ell_{3}%
+\ell_{4}}\left(  \rho_{5}\lambda\right)  \lambda d\lambda\\
&  =\frac{1}{\pi^{3}}\int_{0}^{\infty}J_{\ell_{1}}\left(  \rho_{1}%
\lambda\right)  J_{\ell_{2}}\left(  \rho_{2}\lambda\right)  J_{\ell_{3}%
}\left(  \rho_{3}\lambda\right)  \int_{0}^{\pi}J_{\ell_{1}+\ell_{2}+\ell_{3}%
}\left(  \kappa_{2}\lambda\right)  \cos\left(  \left(  \ell_{1}+\ell_{2}%
+\ell_{3}\right)  \alpha_{4}-\ell_{4}\beta_{2,2}\right)  d\beta_{2,2}\lambda
d\lambda\\
&  =\frac{1}{\pi^{3}}\int_{0}^{\pi}\int_{0}^{\pi}\cos\left(  \ell_{1}%
\alpha_{2}-\ell_{2}\alpha_{1}\right)  \cos\left(  \left(  \ell_{1}+\ell
_{2}\right)  \alpha_{3}-\ell_{3}\beta_{1,2}\right)  \frac{\cos\left(  \left(
\ell_{1}+\ell_{2}+\ell_{3}\right)  \alpha_{4}-\ell_{4}\beta_{2,2}\right)
d\beta_{2,2}d\beta_{1,2}}{\rho_{2}\kappa_{1}\sin\alpha_{1}}.
\end{align*}

\end{proof}

\section{Dirac-function in polar coordinates \label{Proof_Dirac4a}}

The covariance function of a homogenous field has a symmetric spectral
representation where the Dirac 'function' is involved. Let $\delta\left(
\cdot\right)  $ denote the Dirac 'function', more precisely $\delta\left(
\cdot\right)  $ is a distribution putting all the mass at zero, for instance
the integral of Bessel functions provides Dirac function
\begin{equation}
\int\limits_{0}^{\infty}J_{\ell}\left(  \rho r\right)  J_{\ell}\left(  \kappa
r\right)  rdr=\frac{\delta\left(  \rho-\kappa\right)  }{\rho},
\label{Dirac_Bessel}%
\end{equation}
see \cite{Arfken2001} Sect 11. p691. We shall apply the Jacobi-Anger expansion
on the plane
\begin{equation}
e^{i\rho r\cos\left(  \varphi-\eta\right)  }=\sum_{\ell=-\infty}^{\infty
}i^{\ell}J_{\ell}\left(  \rho r\right)  e^{i\ell\left(  \varphi-\eta\right)
}. \label{Expans_Jacobi_Anger}%
\end{equation}

In order to understand the influence of the Dirac 'function' in polar
coordinates we express it by the integral through the Jacobi-Anger expansion
(\ref{Expans_Jacobi_Anger}) and obtain%
\begin{equation}
\delta\left(  \Sigma_{1}^{p}\rho_{k}\underline{\widehat{\omega}}_{k}\right)
=\frac{1}{\left(  2\pi\right)  ^{2}}\int\limits_{\mathbb{R}^{2}}e^{i\left(
\underline{\lambda}\cdot\Sigma_{1}^{p}\underline{\omega}_{k}\right)
}d\underline{\lambda}, \label{Dirac2D}%
\end{equation}
where the sum of vectors is invariant under permutation.
\begin{align*}
\delta\left(  \Sigma_{1}^{p}\rho_{k}\underline{\widehat{\omega}}_{k}\right)
&  =\frac{1}{\left(  2\pi\right)  ^{2}}\int_{0}^{\infty}\int_{0}^{2\pi}%
%TCIMACRO{\dprod \limits_{k=1}^{p}}%
%BeginExpansion
{\displaystyle\prod\limits_{k=1}^{p}}
%EndExpansion
\sum_{m_{k}=-\infty}^{\infty}i^{m_{k}}J_{m_{k}}\left(  \rho_{k}\lambda\right)
e^{im_{k}\left(  \eta_{k}-\xi\right)  }\lambda d\lambda d\xi\\
&  =\frac{1}{\left(  2\pi\right)  ^{2}}\int_{0}^{\infty}\int_{0}^{2\pi}%
\sum_{m_{1:p}=-\infty}^{\infty}i^{\Sigma_{1}^{p}m_{k}}e^{i\Sigma_{1}^{p}%
m_{k}\left(  \eta_{k}-\xi\right)  }%
%TCIMACRO{\dprod \limits_{k=1}^{p}}%
%BeginExpansion
{\displaystyle\prod\limits_{k=1}^{p}}
%EndExpansion
J_{m_{k}}\left(  \rho_{k}\lambda\right)  \lambda d\lambda d\xi\\
&  =\frac{\delta_{\Sigma_{1}^{p}m_{k}}}{2\pi}\int_{0}^{\infty}\sum
_{m_{1:p-1}=-\infty}^{\infty}e^{i\Sigma_{1}^{p-1}m_{k}\left(  \eta_{k}%
-n_{p}\right)  }J_{-\Sigma_{1}^{p-1}m_{k}}\left(  \rho_{p}\lambda\right)
%TCIMACRO{\dprod \limits_{k=1}^{p-1}}%
%BeginExpansion
{\displaystyle\prod\limits_{k=1}^{p-1}}
%EndExpansion
J_{m_{k}}\left(  \rho_{k}\lambda\right)  \lambda d\lambda\\
&  =\frac{\delta_{\Sigma_{1}^{p}m_{k}}}{2\pi}\int_{0}^{\infty}\sum
_{m_{1:p-1}=-\infty}^{\infty}\left(  -1\right)  ^{\Sigma_{1}^{p-1}m_{k}%
}e^{i\Sigma_{1}^{p-1}m_{k}\left(  \eta_{k}-n_{p}\right)  }J_{\Sigma_{1}%
^{p-1}m_{k}}\left(  \rho_{p}\lambda\right)
%TCIMACRO{\dprod \limits_{k=1}^{p-1}}%
%BeginExpansion
{\displaystyle\prod\limits_{k=1}^{p-1}}
%EndExpansion
J_{m_{k}}\left(  \rho_{k}\lambda\right)  \lambda d\lambda\\
&  =\frac{\delta_{\Sigma_{1}^{p}m_{k}}}{2\pi}\int_{0}^{\infty}\sum
_{m_{1:p-1}=-\infty}^{\infty}e^{i\Sigma_{1}^{p-1}m_{k}\left(  \eta_{k}%
-n_{p}-\pi\right)  }J_{\Sigma_{1}^{p-1}m_{k}}\left(  \rho_{p}\lambda\right)
%TCIMACRO{\dprod \limits_{k=1}^{p-1}}%
%BeginExpansion
{\displaystyle\prod\limits_{k=1}^{p-1}}
%EndExpansion
J_{m_{k}}\left(  \rho_{k}\lambda\right)  \lambda d\lambda
\end{align*}
since $\Sigma_{1}^{p}m_{k}=0$, $m_{p}=$ $-\Sigma_{1}^{p-1}m_{k}$. We can apply
here the Theorem \ref{Theorem_Jprod} for a clear expression. Some particular
cases as follows.

\begin{enumerate}
\item If $\boldsymbol{p=2}$,%
\begin{align*}
\delta\left(  \Sigma_{1}^{2}\rho_{k}\underline{\widehat{\omega}}_{k}\right)
&  =\frac{1}{2\pi}\int_{0}^{\infty}\sum_{m=-\infty}^{\infty}\left(  -1\right)
^{m}e^{im\left(  \eta_{1}-\eta_{2}\right)  }J_{m}\left(  \rho_{1}%
\lambda\right)  J_{m}\left(  \rho_{2}\lambda\right)  \lambda d\lambda\\
&  =\frac{1}{2\pi}\frac{\delta\left(  \rho_{1}-\rho_{2}\right)  }{\rho_{1}%
}\sum_{m=-\infty}^{\infty}e^{im\left(  \eta_{1}-\eta_{2}-\pi\right)  }\\
&  =\frac{\delta\left(  \rho_{1}-\rho_{2}\right)  }{\rho_{2}}\delta\left(
\eta_{1}-\eta_{2}-\pi\right)  ,
\end{align*}
hence the integral is taken according to the subspace $\rho_{1}=\rho_{2}$ and
$\eta_{1}=\eta_{2}+\pi$, it corresponds to $\underline{\omega}_{1}%
=-\underline{\omega}_{2}$, this subspace is the one what is expected.

\item For $\boldsymbol{p=3}$, we apply Lemma \ref{Lemma_Jprod3},
\begin{align}
&  \delta\left(  \Sigma_{1}^{3}\rho_{k}\underline{\widehat{\omega}}%
_{k}\right)  =\frac{1}{2\pi}\int_{0}^{\infty}\sum_{m_{1:2}=-\infty}^{\infty
}e^{i\Sigma_{1}^{2}m_{k}\left(  \eta_{k}-\eta_{3}-\pi\right)  }J_{m_{1}+m_{2}%
}\left(  \rho_{3}\lambda\right)
%TCIMACRO{\dprod \limits_{k=1}^{2}}%
%BeginExpansion
{\displaystyle\prod\limits_{k=1}^{2}}
%EndExpansion
J_{m_{k}}\left(  \rho_{k}\lambda\right)  \lambda d\lambda\label{Dirac3}\\
&  =\frac{\chi\left(  \triangle|\rho_{1},\rho_{2},\rho_{3}\right)  }{2\pi
^{2}\rho_{2}\rho_{3}\sin\alpha_{1}}\sum_{m_{1:2}=-\infty}^{\infty}%
e^{i\Sigma_{1}^{2}m_{k}\left(  \eta_{k}-\eta_{3}-\pi\right)  }\cos\left(
m_{2}\alpha_{1}-m_{1}\alpha_{2}\right) \nonumber\\
=\frac{\chi\left(  \triangle|\rho_{1},\rho_{2},\rho_{3}\right)  }{\left(
2\pi\right)  ^{2}\rho_{2}\rho_{3}\sin\alpha_{1}}  &  \left(  \sum
_{m_{1}=-\infty}^{\infty}e^{im_{1}\left(  \eta_{1}-\eta_{3}-\pi-\alpha
_{2}\right)  }\sum_{m_{2}=-\infty}^{\infty}e^{im_{2}\left(  \eta_{2}-\eta
_{3}-\pi+\alpha_{1}\right)  }\right. \nonumber\\
&  \left.  +\sum_{m_{1}=-\infty}^{\infty}e^{im_{1}\left(  \eta_{1}-\eta
_{3}-\pi+\alpha_{2}\right)  }\sum_{m_{2}=-\infty}^{\infty}e^{im_{2}\left(
\eta_{2}-\eta_{3}-\pi-\alpha_{1}\right)  }\right) \nonumber\\
&  =\frac{\chi\left(  \triangle|\rho_{1},\rho_{2},\rho_{3}\right)  }{\rho
_{2}\rho_{3}\sin\alpha_{1}}\left(  \delta\left(  \eta_{1}-\eta_{3}-\pi
-\alpha_{2}\right)  \delta\left(  \eta_{2}-\eta_{3}-\pi+\alpha_{1}\right)
\right. \nonumber\\
&  \left.  +\delta\left(  \eta_{1}-\eta_{3}-\pi+\alpha_{2}\right)
\delta\left(  \eta_{2}-\eta_{3}-\pi-\alpha_{1}\right)  \right)  ,\nonumber
\end{align}
where the notations of Figure \ref{Triangle} are used. Here the Dirac
'function' is concentrated on the subspace when $\left(  \rho_{1},\rho
_{2},\rho_{3}\right)  $ forms a triangle, this triangle defines angles
$\alpha_{1}$, $\alpha_{2}$, $\alpha_{3}$, see Figure \ref{Triangle}. Once
$\alpha_{1}$, $\alpha_{2}$, $\alpha_{3}$, are given there are two possible
choices for angles $\eta_{1}-\eta_{3}$, $\eta_{2}-\eta_{3}$, such that
$\eta_{3}$ varies from $0$ to $2\pi$. Actually we plotted the case when
$\eta_{3}=0$, see Figure \ref{Triangle}. One can also check that the set
$\Sigma_{1}^{3}\rho_{k}\underline{\widehat{\omega}}_{k}=0$, will not change if
we put $m_{2}=-m_{1}-m_{3}$in (\ref{Dirac3}) instead of $m_{3}=-m_{1}-m_{2}$,
although it may be counted when the principal domain of the bispectrum is of interest.

\item Similarly, for\textbf{ }$\boldsymbol{p=4}$, we have
\begin{align}
&  \delta\left(  \Sigma_{1}^{4}\rho_{k}\underline{\widehat{\omega}}_{k}\right)
\label{Dirac4}\\
&  =\frac{1}{2\pi}\int_{0}^{\infty}\sum_{m_{1:3}=-\infty}^{\infty}%
e^{i\Sigma_{1}^{3}m_{k}\left(  \eta_{k}-\eta_{4}-\pi\right)  }J_{m_{1}%
+m_{2}+m_{3}}\left(  \rho_{4}\lambda\right)
%TCIMACRO{\dprod \limits_{k=1}^{3}}%
%BeginExpansion
{\displaystyle\prod\limits_{k=1}^{3}}
%EndExpansion
J_{m_{k}}\left(  \rho_{k}\lambda\right)  \lambda d\lambda\nonumber\\
&  =\frac{1}{2\pi^{3}}\sum_{m_{1:3}=-\infty}^{\infty}e^{i\Sigma_{1}^{3}%
m_{k}\left(  \eta_{k}-\eta_{4}-\pi\right)  }\nonumber\\
&  \int_{0}^{\pi}\cos\left(  m_{1}\alpha_{2}-m_{2}\alpha_{1}\right)
\cos\left(  \left(  m_{1}+m_{2}\right)  \alpha_{3}-m_{3}\beta_{2}\right)
\frac{\chi\left(  \bigtriangleup|\rho_{1},\rho_{2},\kappa\right)  }{\rho
_{2}\kappa\sin\alpha_{1}}d\beta_{2},\nonumber
\end{align}
see Lemma \ref{Lemma_Jprod4} and Figure \ref{Quadrilateral} for this case.
Since $\rho_{k}\underline{\widehat{\omega}}_{k}$ are given one can expect some
more precise expression. Indeed
\begin{align}
&  \frac{1}{2\pi^{3}}\sum_{m_{1:3}=-\infty}^{\infty}e^{i\Sigma_{1}^{3}%
m_{k}\left(  \eta_{k}-\eta_{4}-\pi\right)  }\cos\left(  m_{1}\alpha_{2}%
-m_{2}\alpha_{1}\right)  \cos\left(  \left(  m_{1}+m_{2}\right)  \alpha
_{3}-m_{3}\beta_{2}\right) \label{Dirac4_Prod}\\
&  =\delta\left(  \eta_{1}-\eta_{4}-\pi+\alpha_{2}+\alpha_{3}\right)
\delta\left(  \eta_{2}-\eta_{4}-\pi-\alpha_{1}+\alpha_{3}\right)
\delta\left(  \eta_{3}-\eta_{4}-\pi-\beta_{2}\right) \nonumber\\
&  +\delta\left(  \eta_{1}-\eta_{4}-\pi+\alpha_{2}-\alpha_{3}\right)
\delta\left(  \eta_{2}-\eta_{4}-\pi-\alpha_{1}-\alpha_{3}\right)
\delta\left(  \eta_{3}-\eta_{4}-\pi+\beta_{2}\right) \nonumber\\
&  +\delta\left(  \eta_{1}-\eta_{4}-\pi-\alpha_{2}+\alpha_{3}\right)
\delta\left(  \eta_{2}-\eta_{4}-\pi+\alpha_{1}+\alpha_{3}\right)
\delta\left(  \eta_{3}-\eta_{4}-\pi-\beta_{2}\right) \nonumber\\
&  +\delta\left(  \eta_{1}-\eta_{4}-\pi-\alpha_{2}-\alpha_{3}\right)
\delta\left(  \eta_{2}-\eta_{4}-\pi+\alpha_{1}-\alpha_{3}\right)
\delta\left(  \eta_{3}-\eta_{4}-\pi+\beta_{2}\right)  .\nonumber
\end{align}
Now, for a given $\alpha_{1},\rho_{2},\rho_{3},\rho_{4}$, the diagonal
$\kappa$ and $\beta_{2}$, are equivalent, \newline$\kappa\left(  \beta
_{2}\right)  =\sqrt{\rho_{3}^{2}+\rho_{4}^{2}-2\rho_{3}\rho_{4}\cos\beta_{2}}%
$, say, let $\beta_{2}$ be the subject of changes. Hence $\alpha_{3}$ is
determined, together with $\alpha_{1}$ and $\alpha_{2}$, see Figure
\ref{Quadrilateral}. It follows that $\eta_{3}-\eta_{4}=\pi\pm\beta_{2}$. each
choice of $\eta_{3}-\eta_{4}$ we have two possibilities $\eta_{1}-\eta_{4}$
and $\eta_{2}-\eta_{4}$, are defined as a function of $\alpha_{1}$,
$\alpha_{2}$ and $\alpha_{3}$.
\end{enumerate}

\section{Cumulants of spectral measures $Z_{\ell}\left(  \rho d\rho\right)
$\label{Proof_cumulants}}

We generalize the joint cumulant stochastic spectral measures
\begin{multline*}
\operatorname*{Cum}\left(  Z_{0}\left(  \rho_{1}d\rho_{1}\right)  ,Z_{\ell
}\left(  \rho_{2}d\rho_{2}\right)  ,Z_{-\ell}\left(  \rho_{3}d\rho_{3}\right)
\right) \\
=2\left(  -1\right)  ^{\ell}\chi\left(  \bigtriangleup|\rho_{1},\rho_{2}%
,\rho_{3}\right)  \frac{\cos\left(  \ell\arccos\left(  R\right)  \right)
}{\rho_{2}\rho_{3}\sqrt{1-R^{2}}}S_{3}\left(  \rho_{1},\rho_{2},\rho
_{3}\right)
%TCIMACRO{\tprod \limits_{k=1}^{3}}%
%BeginExpansion
{\textstyle\prod\limits_{k=1}^{3}}
%EndExpansion
\rho_{k}d\rho_{k},
\end{multline*}
where $R=\left(  \rho_{2}^{2}+\rho_{3}^{2}-\rho_{1}^{2}\right)  /\left(
2\rho_{2}\rho_{3}\right)  =\cos\alpha_{1}$ and $\chi\left(  \bigtriangleup
|\rho_{1},\rho_{2},\rho_{3}\right)  =1$, if $\rho_{1},\rho_{2},\rho_{3}%
\ $constitute a triangle, $0$ otherwise, see \cite{Terdik2014Publi} in order
to get the formula for trispectrum and higher order spectra. $\chi\left(
\bigtriangleup|\rho_{1},\rho_{2},\rho_{3}\right)  $ implies that the wave
numbers $\rho_{1}$, $\rho_{2}$, and $\rho_{3}$ should satisfy the triangle relation.

Consider the fourth order cumulant
\begin{multline*}
\operatorname*{Cum}\left(  Z_{0}\left(  \rho_{1}d\rho_{1}\right)  ,Z_{\ell
_{2}}\left(  \rho_{2}d\rho_{2}\right)  ,Z_{\ell_{3}}\left(  \rho_{3}d\rho
_{3}\right)  ,Z_{-\left(  \ell_{2}+\ell_{3}\right)  }\left(  \rho_{4}d\rho
_{4}\right)  \right) \\
=\iiiint\limits_{0}^{2\pi}e^{-i\ell_{2}\left(  \eta_{2}-\eta_{4}\right)
-i\ell_{3}\left(  \eta_{3}-\eta_{4}\right)  }\delta\left(  \Sigma_{1}^{4}%
\rho_{k}\underline{\widehat{\omega}}_{k}\right)  S_{4}\left(  \alpha_{1}%
,\rho_{2:4},\beta_{2}\right)
%TCIMACRO{\tprod \limits_{k=1}^{4}}%
%BeginExpansion
{\textstyle\prod\limits_{k=1}^{4}}
%EndExpansion
d\eta_{k}%
%TCIMACRO{\tprod \limits_{k=1}^{4}}%
%BeginExpansion
{\textstyle\prod\limits_{k=1}^{4}}
%EndExpansion
\rho_{k}d\rho_{k},
\end{multline*}
replace the Dirac-function by (\ref{Dirac2D}), (\ref{Dirac4}), and use the
orthogonality of the 'spherical harmonics',
\[
\iiiint\limits_{0}^{2\pi}e^{-i\ell_{2}\left(  \eta_{2}-\eta_{4}\right)
-i\ell_{3}\left(  \eta_{3}-\eta_{4}\right)  \ }e^{i\Sigma_{1}^{3}m_{k}\left(
\eta_{k}-\eta_{4}\right)  }%
%TCIMACRO{\tprod \limits_{k=1}^{4}}%
%BeginExpansion
{\textstyle\prod\limits_{k=1}^{4}}
%EndExpansion
d\eta_{k}=\delta_{m_{1}}\delta_{m_{2}-\ell_{2}}\delta_{m_{3}-\ell_{3}}\left(
2\pi\right)  ^{4},
\]
and a particular case of Lemma \ref{Lemma_Jprod4}%
\begin{multline*}
\int_{0}^{\infty}J_{0}\left(  \rho_{1}\lambda\right)  J_{\ell_{2}}\left(
\rho_{2}\lambda\right)  J_{\ell_{3}}\left(  \rho_{3}\lambda\right)
J_{\ell_{2}+\ell_{3}}\left(  \rho_{4}\lambda\right)  \lambda d\lambda\\
=\frac{1}{\pi^{2}}\int_{0}^{\pi}\cos\left(  \left(  \ell_{1}+\ell_{2}\right)
\alpha_{3}-\ell_{3}\beta_{2}\right)  \frac{\cos\left(  \ell_{1}\alpha_{2}%
-\ell_{2}\alpha_{1}\right)  }{\pi\rho_{2}\kappa\sin\alpha_{1}}\chi\left(
\bigtriangleup|\rho_{1},\rho_{2},\kappa\right)  d\beta_{2},
\end{multline*}
see Figure \ref{Quadrilateral} for notations. The result is
\begin{multline*}
\operatorname*{Cum}\left(  Z_{0}\left(  \rho_{1}d\rho_{1}\right)  ,Z_{\ell
_{2}}\left(  \rho_{2}d\rho_{2}\right)  ,Z_{\ell_{3}}\left(  \rho_{3}d\rho
_{3}\right)  ,Z_{-\left(  \ell_{2}+\ell_{3}\right)  }\left(  \rho_{4}d\rho
_{4}\right)  \right) \\
=4\left(  -1\right)  ^{\ell_{2}+\ell_{3}}\int_{0}^{\pi}\frac{\chi\left(
\bigtriangleup|\rho_{1},\rho_{2},\kappa\right)  }{\rho_{1}\kappa\sin\alpha
_{2}}\cos\left(  \ell_{2}\alpha_{1}\right)  \cos\left(  \ell_{2}\alpha
_{3}-\ell_{3}\beta_{2}\right)  S_{4}\left(  \alpha_{1},\rho_{2:4},\beta
_{2}\right)  d\beta_{2}%
%TCIMACRO{\tprod \limits_{k=1}^{4}}%
%BeginExpansion
{\textstyle\prod\limits_{k=1}^{4}}
%EndExpansion
\rho_{k}d\rho_{k}.
\end{multline*}

We obtain the cumulant similarly for general $p$, it follows from a particular
case of Theorem \ref{Theorem_Jprod}, when $\ell_{1}=0$, see Figure
\ref{multilateral}.

\begin{lemma}
\label{Lemma_genCum_Z}
\begin{multline*}
\operatorname*{Cum}\left(  Z_{0}\left(  \rho_{1}d\rho_{1}\right)  ,Z_{\ell
_{2}}\left(  \rho_{2}d\rho_{2}\right)  ,Z_{\ell_{3}}\left(  \rho_{3}d\rho
_{3}\right)  ,\ldots,Z_{-\left(  \ell_{2}+\ell_{3}\cdots+\ell_{p-1}\right)
}\left(  \rho_{p}d\rho_{p}\right)  \right) \\
=\left(  -1\right)  ^{\ell_{2}+\ell_{3}\cdots+\ell_{p-1}}2^{p-2}\int_{0}^{\pi
}\cdots\int_{0}^{\pi}S_{p}\left(  \alpha_{1},\rho_{2:p},\beta_{1:p-3,2}%
\right)  L\left(  \ell_{2:p-1},\alpha_{1},\beta_{1:p-3,1}\right)
%TCIMACRO{\tprod \limits_{k=2}^{p-2}}%
%BeginExpansion
{\textstyle\prod\limits_{k=2}^{p-2}}
%EndExpansion
d\beta_{k-1,2}%
%TCIMACRO{\tprod \limits_{m=1}^{p}}%
%BeginExpansion
{\textstyle\prod\limits_{m=1}^{p}}
%EndExpansion
\rho_{m}d\rho_{m},
\end{multline*}
where
\[
L\left(  \ell_{2:p-1},\alpha_{1:p-1},\beta_{1:p-1,2}\right)  =\frac
{\cos\left(  \ell_{2}\alpha_{1}\right)  }{\rho_{2}\kappa_{1}\sin\left(
\alpha_{1}\right)  }\prod\limits_{k=2}^{p-2}\cos\left(  \alpha_{k+1}%
%TCIMACRO{\tsum _{j=2}^{k}}%
%BeginExpansion
{\textstyle\sum_{j=2}^{k}}
%EndExpansion
\ell_{j}-\ell_{k+1}\beta_{k-1,2}\right)  \chi\left(  \bigtriangleup|\rho
_{k+2},\kappa_{k},\kappa_{k+1}\right)  .
\]

\end{lemma}

\bibliographystyle{aalpha}
\bibliography{00BiblMM13}

\end{document}